\renewcommand{\geq}{\geqslant}
\renewcommand{\leq}{\leqslant}
\newtheorem{theorem}{Theorem}
\newtheorem{proposition}{Proposition}[section]
\newtheorem{lemma}[proposition]{Lemma}
\newtheorem*{main-theorem}{Main Theorem}
\newtheorem*{theorem*}{Theorem}
\theoremstyle{definition}
\newtheorem{remark}[proposition]{Remark}
\newtheorem*{remark*}{Remark}
\numberwithin{equation}{section}
\def\phi{\varphi}
\def\ZZ{{\mathbb Z}}
\def\NN{{\mathbb N}}
\def\reals{{\mathbb R}}
\def\Ci{{\mathcal C}^\infty}
\def\Im{\,\mathrm{Im}\,}
\def\sgn{\mathrm{sgn}\,}
\def\supp{\mathrm{supp}\,}
\def\O{{\mathcal O}}
\def\s{{\mathcal S}}
\def\Op{\mathrm{Op}\,}
\def\csh{{\left( h/\tilde{h} \right)}}
\def\phi{\varphi}
\def\be{\begin{eqnarray*}}
\def\ee{\end{eqnarray*}}
\def\ben{\begin{eqnarray}}
\def\een{\end{eqnarray}}
\def\lll{\left\langle}
\def\rrr{\right\rangle}
\def\L2R{L_{\text{Rest}}^2}
\def\11{\mathds{1}}
\def\L2c{L^2_{\text{comp}}}
\def\th{{\tilde{h}}}
\def\tDelta{\widetilde{\Delta}}
\def\tP{\widetilde{P}}
\def\tu{\tilde{u}}
\def\C{\mathcal{C}}
\def\Vol{\text{Vol}}
\def\tg{\tilde{g}}
\def\p{\partial}
\def\p{\partial}
\newcommand{\abs}[1]{{\left\lvert{#1}\right\rvert}}
\newcommand{\norm}[1]{{\left\lVert{#1}\right\rVert}}
\newcommand{\ang}[1]{{\left\langle{#1}\right\rangle}}
\newcommand{\pa}{{\partial}}
\newcommand{\ep}{{\epsilon}}
\newcommand{\hamvf}{{\textsf{H}}}
\newcommand{\B}{\mathcal{B}}
\newcommand{\la}{{\langle}}
\newcommand{\ra}{{\rangle}}
\newcommand{\cd}{{\,\cdot\,}}
\newcommand{\R}{{\mathbb{R}}}
\begin{document}

\title[Local smoothing with transmission]{Sharp local smoothing for
  manifolds with smooth inflection transmission}

\author{Hans Christianson}
\email{hans@math.unc.edu}
\address{Department of Mathematics, UNC-Chapel Hill \\ CB\#3250
  Phillips Hall \\ Chapel Hill, NC 27599}

\author{Jason Metcalfe}
\email{metcalfe@email.unc.edu}
\address{Department of Mathematics, UNC-Chapel Hill \\ CB\#3250
  Phillips Hall \\ Chapel Hill, NC 27599}

\subjclass[2000]{}
\keywords{}

\begin{abstract}

We consider a family of spherically symmetric, asymptotically
Euclidean manifolds with two trapped sets, one which is unstable and
one which is semi-stable.  The phase space structure is that of an
inflection transmission set.  We prove a sharp local smoothing estimate for
the linear Schr\"odinger equation with a loss which depends on how
flat the manifold is near each of the trapped sets.  The result
interpolates between the family of similar estimates in
\cite{ChWu-lsm}.  
As a consequence of the techniques of proof, we also show a sharp high
energy 
resolvent estimate with a polynomial loss depending on how flat the
manifold is near each of the trapped sets.

\end{abstract}

\maketitle

\section{Introduction}
\label{S:intro}

In this paper we study the local smoothing effect for the Schr\"odinger
equation on a class of manifolds with a trapped set that is mixed
unstable and {\it semistable}, which is a version of inflection
transmission.  
Our main result is a generalization of the local smoothing estimate
\[\int_0^T \|\la x\ra^{-1/2-}
e^{it\Delta}u_0\|^2_{H^{1/2}}\,dt\lesssim \|u_0\|^2_{L^2}.\]
Such estimates first appeared in \cite{ConSau}, \cite{Sl}, \cite{Vega} and were extended to
nontrapping asymptotically flat geometries in \cite{CKS},
\cite{MR1373768}.  See, e.g., \cite{MR2333213}, \cite{MR2565717} for some recent generalizations.
The presence of trapping
necessitates a loss of smoothing as was shown in \cite{Doi}.  
If the trapping is unstable and
nondegenerate, this has already been studied in
\cite{Bur-sm},\cite{Chr-NC,Chr-disp-1,Chr-QMNC},\cite{Dat-sm}, \cite{BGH} amongst several
others.  Trapping that is unstable but degenerately so was the topic of
\cite{ChWu-lsm}.  The novel thing in this
paper is the existence of semistable trapping, that is, trapping which
is stable from one direction and unstable from another direction.

Let us begin by describing the geometry.  Let $m_1$ and $m_2$ be
positive integers, and set 
\[
a(x) = x^{2m_1-1}(x-1)^{2m_2}/(1 + x^2)^{m_1 +m_2-1},
\]
so that
\[
a(x) \sim \begin{cases} x^{2m_1-1}, \quad x \sim 0, \\
(x-1)^{2m_2}/2^{m_1+m_2-1} \quad x \sim 1, \\
x, \quad |x| \to \infty. \end{cases}
\]
Set 
\[
A^2(x) = 1 + \int_0^x a(y) dy,
\]
and notice that
\begin{equation}\label{A2}
A^2(x) \sim \begin{cases} 1 + x^{2m_1}, \quad x \sim 0, \\
C_1 + c_2(x-1)^{2m_2 +1} \quad x \sim 1, \\
x^2, \quad | x | \to \infty. \end{cases}
\end{equation}
Here $C_1 >1$ and $c_2<1$ are constants which are easily computed but
inessential, except for their relative sizes compared to $1$.

Now let $X = \reals_x \times \reals_\theta / 2 \pi
\ZZ$, equipped with the metric 
\[
ds^2 = d x^2 + A^2(x) d \theta^2,
\]
so that $X$ is asymptotically Euclidean with two ends and has two
trapped sets.  The trapping occurs where $A'(x)=0$, which is at $x = 0$ and $x
= 1$ respectively (see Figure \ref{fig:mfld}).

 \begin{figure}
  \hfill
  \centerline{\input{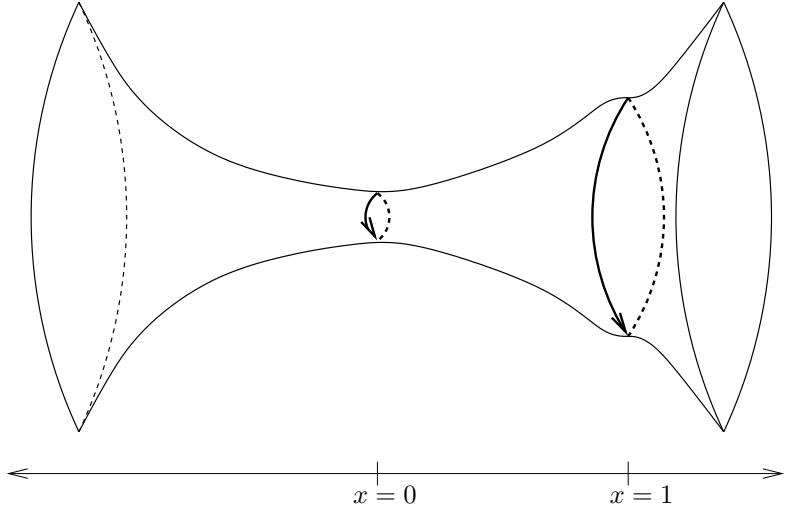}}
  \caption{\label{fig:mfld} A piece of the manifold $X$ with the trapped
    sets at $x = 0$ and at $x= 1$.   }
  \hfill
  \end{figure}


The metric determines the volume form
\[
d \Vol = A(x) dx d \theta
\]
and the Laplace-Beltrami operator acting on $0$-forms
\[
\Delta f = (\partial_x^2 + A^{-2} \partial_\theta^2 + A^{-1}
A' \partial_x) f.
\]
We conjugate $\Delta$ and reduce to a one dimensional problem.
Indeed, we set $L : L^2(X, d\Vol) \to L^2(X, dx d \theta)$ to be the
isometry
\[Lu(x,\theta)=A^{1/2}(x)u(x,\theta).\]
With mild assumptions on $A$, $\tDelta = L\Delta L^{-1}$ is
(essentially) 
self-adjoint on $L^2(X,dx d\theta)$.  More explicitly, we have
\[-\tDelta f = (-\partial_x^2 -A^{-2}(x)\partial_\theta^2 + V_1(x))f\]
with
\[V_1(x)=\frac{1}{2}A''A^{-1} - \frac{1}{4}(A')^2 A^{-2}.\]

Given a function $\psi$ on $X$, we expand into its Fourier series,
$\psi(x,\theta)=\sum_k \phi_k(x)e^{ik\theta}$, and note that
\[(-\tDelta - \lambda^2)\psi = \sum_k e^{ik\theta} (P_k-\lambda^2)\phi_k(x),\]
where 
\[P_k\phi_k(x) = \Bigl(-\frac{d^2}{dx^2} + k^2 A^{-2}(x) + V_1(x)\Bigr)\phi_k(x).\]
By setting $h=k^{-1}$, we pass to the semiclassical operator
\[(P(h)-z)\phi(x) = \Bigl(-h^2\frac{d^2}{dx^2} + V(x)-z\Bigr)\phi(x),\]
where the potential is
\[V(x)=A^{-2}(x)+h^2 V_1(x)\]
and the spectral parameter is $z=h^2\lambda^2$.

Our main result is the following local smoothing estimate with sharp
loss.  Using the common notation $D_t = (1/i)\partial_t$, we have:

\begin{theorem}[Local Smoothing]
\label{T:smoothing}

Suppose $X$ is as above with $m_1, m_2 \geq 1$ and assume $u$ solves
\[
\begin{cases} (D_t -\Delta ) u = 0 \text{ in } \reals \times X , \\
u|_{t=0} = u_0 \in H^{s}
\end{cases}
\]
for some $s >0$ sufficiently large.  
Then for any $T<\infty$, there exists a constant
$C_T >0$ such that 
\begin{align*}
\int_0^T & \Bigl( \| \lll x \rrr^{-1} \partial_x u \|_{L^2(d\Vol)}^2 + \| \lll x
\rrr^{-3/2} \partial_\theta u \|_{L^2(d\Vol)}^2 \Bigr) \, dt \\
&\qquad\qquad \leq C_T \Bigl(\| \lll D_\theta
\rrr^{\beta(m_1, m_2) } u_0 \|_{L^2(d\Vol)}^2 + \| \lll D_x \rrr^{1/2} u_0 \|_{L^2(d\Vol)}^2\Bigr),
\end{align*}
where
\begin{equation}\label{beta}
\beta(m_1, m_2) = 
\max \left( \frac{m_1}{m_1 + 1}, \frac{2m_2 + 1}{2m_2 +3} 
\right). 
\end{equation}

Moreover this estimate is sharp, in the sense that no polynomial
improvement in regularity is true.

\end{theorem}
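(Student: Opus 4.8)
The strategy is the standard reduction of local smoothing to a uniform high-energy resolvent estimate, followed by a careful analysis of the resolvent near the two trapped sets. First I would use a $TT^*$ / Plancherel-in-time argument (as in \cite{Bur-sm}, \cite{Chr-NC}, \cite{ChWu-lsm}) to convert the space-time estimate into the claim that for $\lambda$ real and large,
\[
\| \lll x \rrr^{-1} \partial_x (-\Delta - \lambda^2 - i0)^{-1} f \|_{L^2} + \| \lll x \rrr^{-3/2} \partial_\theta (-\Delta - \lambda^2 - i0)^{-1} f \|_{L^2} \leq C \lll \lambda \rrr^{?} \| \lll x \rrr^{s} f \|_{L^2},
\]
with the loss distributed so that each derivative on the left is paired against the correct weight and power of $\lambda$ on the right. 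Because the geometry is a surface of revolution, I would then decompose into Fourier modes $e^{ik\theta}$ and work mode by mode with the semiclassical operator $P(h) - z$ from the excerpt, $h = k^{-1}$, $z = h^2\lambda^2$; the angular regularity $\lll D_\theta \rrr^{\beta}$ on the right becomes, after this reduction, a gain of $h^{-\beta}$ which we are allowed to lose in the resolvent bound for the $k$-th mode.

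The core is a semiclassical resolvent estimate for $P(h) - z$ with $z$ near the two critical values of $V$, namely $V(0) = A^{-2}(0) + O(h^2) \sim 1$ (the maximum of $A^{-2}$ near $x=0$, an unstable/degenerate-maximum barrier of order $2m_1$) and $V(1) \sim C_1^{-1} < 1$ (an inflection-type critical point of order $2m_2+1$, giving the semistable trapping). Away from these energies $V$ is nontrapping and one has an $O(h^{-1})$ resolvent bound by standard positive-commutator / Mourre estimates, which feeds into the lossless part $\|\lll D_x\rrr^{1/2} u_0\|$. Near $z \sim V(0)$ I would localize with a microlocal cutoff and use the normal form for a nondegenerate-to-order-$2m_1$ barrier top: this is exactly the situation analyzed in \cite{ChWu-lsm}, producing a polynomial resolvent loss $h^{-m_1/(m_1+1)}$ (the first term in \eqref{beta}). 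Near $z \sim V(1)$ the new work is needed: here $V$ has a one-sided critical point, $V(x) - V(1) \sim c\,(x-1)^{2m_2+1}$, so the classical flow is hyperbolic on one side and parabolically tangent on the other — an inflection transmission. I would build a second normal form there, rescaling $x - 1 = h^{2/(2m_2+3)} y$ and $z - V(1) = h^{(4m_2+2)/(2m_2+3)} E$ so that $P(h) - z$ becomes (to leading order) $h^{(4m_2+2)/(2m_2+3)}(-D_y^2 + c y^{2m_2+1} - E)$, a model Airy-type operator whose resolvent, via explicit special-function estimates (Langer/turning-point asymptotics, or a direct energy estimate for the model), loses exactly $h^{-(2m_2+1)/(2m_2+3)}$. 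Gluing the nontrapping resolvent to the two model resolvents with a partition of unity and absorbing the commutator errors (which are lower order by the gain in each model) yields the global bound with loss $h^{-\beta(m_1,m_2)}$, the maximum of the two contributions since the worst trapped set dominates.

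The sharpness statement I would prove by a quasimode construction: near whichever trapped set realizes the maximum in \eqref{beta}, the model operator above has approximate eigenfunctions (Gaussian-type for $x=0$ concentrated at the barrier top with spectral width $\sim h^{2m_1/(m_1+1)}$, or Airy-type for $x=1$ with width $\sim h^{(4m_2+2)/(2m_2+3)}$) which saturate the resolvent bound; transporting these back to genuine solutions of the Schr\"odinger equation via the $TT^*$ correspondence shows that any strictly smaller power of $\lll D_\theta\rrr$ on the right-hand side fails. The main obstacle is the construction and estimate of the resolvent for the inflection model at $x = 1$: the one-sidedness means standard barrier-top normal forms do not apply directly, the turning-point structure is that of a degenerate Airy equation of order $2m_2+1$, and one must show the resolvent loss is \emph{exactly} $h^{-(2m_2+1)/(2m_2+3)}$ — both the upper bound (for Theorem \ref{T:smoothing}) and, for sharpness, the matching lower bound via quasimodes that genuinely live on the semistable set rather than leaking out the hyperbolic side.
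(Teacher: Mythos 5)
Your plan follows the paper's architecture: Fourier decompose in $\theta$, reduce via a $TT^*$/Plancherel-in-time argument to a cutoff semiclassical resolvent estimate, glue a nontrapping escape-function estimate to microlocal estimates at the two critical values of $V$, rescale near the inflection point, and build quasimodes for sharpness. Your scaling at the inflection point is also correct ($x-1 = h^{2/(2m_2+3)}y$, $z - V(1) \sim h^{(4m_2+2)/(2m_2+3)}E$, prefactor $h^{(4m_2+2)/(2m_2+3)}$). The one genuine divergence is the key microlocal estimate near $(1,0)$: you propose to invert the degenerate-Airy model $-D_y^2 + c\,y^{2m_2+1} - E$ via Langer/turning-point asymptotics, whereas the paper never solves a model ODE. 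Instead it works in a two-parameter $(h,\th)$ semiclassical calculus, constructs an escape function $a = \Lambda(\Xi)\Lambda_2(X-1)\chi(x-1)\chi(\xi)$ whose rescaled bracket $H_p a$ is bounded below by $c\,(\Xi^2 + (X-1)^{2m_2})$, and closes by a positivity lemma for $\Op_h^w$ of such symbols together with a commutator-remainder estimate (Lemmas \ref{lemma:positivity0} and \ref{L:Q-comm-error}). This matters precisely at the step you describe as "absorbing the commutator errors": in the escape-function framework the remainder is compared against $\Op_h^w(g\circ\B^{-1})$ itself, which is what makes the absorption close; a special-function parametrix would require a separate argument that WKB errors do not leak across the semistable side of the inflection, and that is not routine. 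Two smaller points to fix if you pursued this: (a) the cutoff resolvent near $(1,0)$ must lose $h^{-(4m_2+2)/(2m_2+3)} = h^{-2\beta}$, not $h^{-\beta}$, since $\lll D_\theta\rrr^\beta$ acting on a squared $L^2$ norm becomes $h^{-2\beta}$ after Plancherel in time --- this is in fact consistent with the prefactor you computed, and is what the paper's Lemma \ref{L:ml-inv} proves; (b) the $\|\lll D_x\rrr^{1/2}u_0\|^2$ term and the perfect smoothing away from $\{x=0,1\}$ are obtained in the paper by a global space-time positive commutator with $B=\tfrac12(\arctan(x)D_x+D_x\arctan(x))$ \emph{before} Fourier-decomposing, which you fold implicitly into the "nontrapping regions" of the resolvent picture; that works but should be stated explicitly, since it also supplies the bound needed to control the low-frequency piece $u_{\mathrm{lo}}$.
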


This theorem requires some remarks.

\begin{remark}
Observe that the maximum {\it
  gain} in regularity in the presence of inflection-transmission
trapping is $2/(2m_2 + 3)$ derivatives.  Each of these fractions lies
in between sequential fractions in the numerology of \cite{ChWu-lsm},
since
\[
\frac{1}{(m+1)+1} < \frac{2}{2m+3} < \frac{1}{m+1}.
\]
\end{remark}

\begin{remark}

In the theorem above, the weights at infinity are different than those
that appear in the standard Euclidean estimate.  Standard cutoff
arguments would allow us to make these match.  The key new aspect of
the theorem, however, is the behavior near the trapped sets, and for clarity in
the proof, we do not modify the weights.  

\end{remark}

\begin{remark}\label{separates}

Theorem \ref{T:smoothing} and indeed also Theorem \ref{T:resolvent}
below are of course true in many more situations.  Of particular
interest, the microlocalization step which separates the trapped sets
at different energies used to prove \eqref{glued}
indicates the same result applies to a manifold with one Euclidean end
and only an inflection transmission trapped set.  

On the other hand, if our manifold has two Euclidean ends, a
degenerate hyperbolic trapped set, and {\it two} inflection
transmission trapped sets {\it at the same} semiclassical energy, it
is natural to suspect that such a theorem is no longer true because
the two inflection transmission sets must tunnel to each other.
However, it is easy to see that the theorem still applies in this
case, since the stable/unstable manifolds for the degenerate
hyperbolic trapped set form a separatrix (in other words, the
degenerate hyperbolic trapped set is at {\it higher} semiclassical
energy).  Hence the same microlocalization applies, and so does the theorem.

\end{remark}

\begin{remark}
We briefly discuss why we have chosen to call this
type of smooth trapping ``inflection-transmission'' type trapping.
The inflection part refers to the fact that the effective potential
after separating variables has an inflection point at the trapped
set.  We have also included transmission in our name because this kind
of trapping bears some resemblance to the traditional transmission
problem.  

The traditional transmission problem concerns a wave equation in a
medium for which the speed of propagation is distinct in different
regions.  For example, one might study solutions to the equation
\[
\begin{cases}
(\p_t^2 - \Delta)u = 0 \text{ for } | x | <1, \\
(\p_t^2 - c^2 \Delta) u = 0 \text{ for } | x | >1,
\end{cases}
\]
where $c \neq 1$.  Of course one also needs to indicate appropriate
boundary conditions at the interaction surface where $| x | = 1$ (see,
for example, \cite{CPV-trans-I,CPV-trans-II,CaVo-trans}).

On the other hand, if we consider a surface of revolution given by a
specific 
generating curve $C$ in the $(x_1 , x_3)$ plane, rotated around the
$x_3$ axis, we get a similar looking picture.  Let 
\[
C = \{ (x_1, 0, x_3) = (A(r), 0, B(r)), r \geq 0 \},
\]
where 
\[
A(r) = \begin{cases} r, \text{ for } 0 \leq r \leq 1, \\\frac{1}{2}r, \text{ for } r \geq 3,
\end{cases}
\]
and assume $0 \leq A'(r) \leq 1$ and $A$ has an inflection point at,
say, $r = 2$.  The function $A(r)$ is sketched schematically in Figure
\ref{fig:AB}.
 \begin{figure}
 \hfill
 \centerline{\input{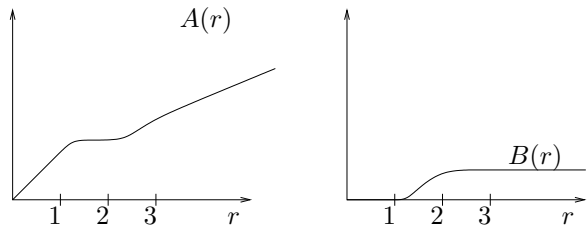}}
 \caption{\label{fig:AB}  The functions $A(x)$ and $B(x)$.}
 \hfill
 \end{figure}

We suppose that $B'$ is compactly supported in the region $1\le r\le
3$ and fix $B(0)=0$.  The function $B(r)$ is also depicted in Figure \ref{fig:AB}.


Rotating the curve $C$ about the $x_3$ axis in $\reals^3$ yields a
manifold which is flat near $0$ and flat outside a compact set, and
changes ``height'' in between (see Figure \ref{fig:mfld-trans}).

 \begin{figure}
 \hfill
 \centerline{\input{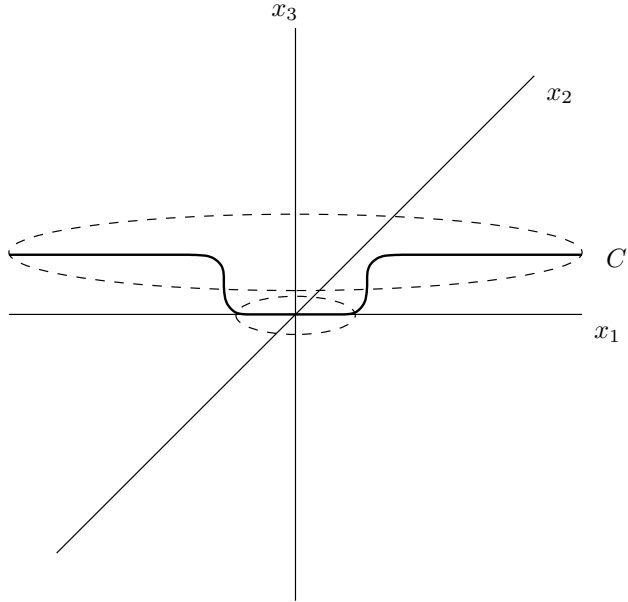}}
 \caption{\label{fig:mfld-trans}  The manifold obtained by rotating the
 curve $C$ about the $x_3$ axis.}
 \hfill
 \end{figure}

Moreover, if we compute the Laplacian on this surface of revolution,
we see that
\[\Delta_g = \frac{1}{r}\partial_r r\partial_r +
\frac{1}{r^2}\partial_\theta^2,\quad 0\le r\le 1\]
but
\[\Delta_g = 4\Bigl(\frac{1}{r}\partial_r r\partial_r +
\frac{1}{r^2}\partial_\theta^2\Bigr),\quad r\ge 3.\]
See, e.g., \cite{Booth} where such a computation has been carried out
in much detail.

\end{remark}

As in \cite{ChWu-lsm}, once we prove Theorem \ref{T:smoothing}, we can
obtain a resolvent bound.  For simplicity, say that our surface of
revolution is Euclidean at infinity.  That is, assume $A(x)=x$ for
$|x|\gg 0$.  Alternatively, we could require dilation analyticity at
infinity, which would permit asymptotically conic spaces as were
treated in \cite{WZ}.

We let
\[R(\lambda)=(-\Delta_g - \lambda^2)^{-1}\]
denote the resolvent on $X$ (where it exists), and take $\Im
\lambda<0$ as our physical sheet.  With a choice of
appropriate branch cut, $\chi R(\lambda)\chi$ extends meromorphically
to $\{\lambda\in\R\,:\, \lambda\gg 0\}$ for any $\chi\in
C^\infty_c(X)$.  See, e.g., \cite{SjZw}.  And, in the degenerate
inflection point setting, we have

\begin{theorem}
\label{T:resolvent}

For any $\chi \in \Ci_c(X)$, there exists a constant
$C= C_{m_1,m_2, \chi} >0$ such that
for $\lambda \gg 0$, 
\[
\| \chi R(\lambda-i0) \chi \|_{L^2 \to L^2} \leq C \max \{
\lambda^{-2/(m_1+1)}, \lambda^{-4/(2m_2+3)} \}.
\]
Moreover, this is the nut estimate, in the sense that no better
polynomial rate of decay holds true.

\end{theorem}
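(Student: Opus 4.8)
The plan is to deduce the high-energy resolvent bound from the local smoothing estimate of Theorem \ref{T:smoothing} via a standard $TT^*$/duality argument, following the strategy of \cite{ChWu-lsm}. First I would separate variables: writing $R(\lambda)$ acting on the $k$-th Fourier mode in $\theta$ reduces the problem to bounding $\chi (P_k - \lambda^2)^{-1} \chi$ uniformly in $k$, where $P_k = -d^2/dx^2 + k^2 A^{-2}(x) + V_1(x)$. Passing to the semiclassical scaling $h = k^{-1}$, $z = h^2\lambda^2$, this becomes a uniform bound on $\chi (P(h) - z)^{-1}\chi$ for $z$ in a neighborhood of the relevant energy window, and the claimed rate $\lambda^{-2/(m_1+1)}$, respectively $\lambda^{-4/(2m_2+3)}$, translates (using $h \sim \lambda^{-1}$ on the trapped modes and $z \sim 1$) into a semiclassical resolvent bound of size $h^{-1+2/(m_1+1)} = h^{-(m_1-1)/(m_1+1)}$, respectively $h^{-1+4/(2m_2+3)} = h^{-(2m_2-1)/(2m_2+3)}$, near each trapped set. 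The high-energy modes (where $k^2 A^{-2} > z$ uniformly, i.e. below the barrier) are non-trapping/elliptic and contribute an $O(1/|\Im\lambda|)$ or even $O(\lambda^{-1})$ bound by standard estimates; the genuinely delicate modes are those for which $z$ sits at one of the critical values $\sup_x A^{-2}(x)$-type thresholds, i.e. at the inflection-transmission energy $z = 1$ or the unstable energy $z = C_1^{-1}$.

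The key bridge is the well-known equivalence, essentially going back to the arguments in \cite{Chr-NC} and used in \cite{ChWu-lsm}, between a local smoothing estimate with loss $h^{-s}$ (in the relevant frequency band) and a cutoff resolvent estimate of size $h^{-1-s}\cdot h = h^{-s}$... more precisely: a local-in-time smoothing bound $\int_0^T \|\chi e^{it\Delta} u_0\|_{H^{1/2}}^2 \, dt \lesssim \|\langle D_\theta\rangle^{\beta} u_0\|^2$ is, after a Fourier transform in $t$ and restriction to a dyadic energy shell, equivalent to $\|\chi(P(h) - z \mp i0)^{-1}\chi\|_{L^2\to L^2} \lesssim h^{-1}\cdot h^{2\beta}$ on that shell, and one checks $h^{-1+2\beta(m_1,m_2)}$ with $\beta$ as in \eqref{beta} reproduces exactly $\max\{\lambda^{-2/(m_1+1)}, \lambda^{-4/(2m_2+3)}\}$ after undoing the scaling $\lambda \sim h^{-1}$. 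So the main work is: (i) run this Fourier-in-time argument carefully, tracking the meromorphic continuation of $\chi R(\lambda) \chi$ across the real axis as discussed after the statement (using the Euclidean-at-infinity or dilation-analytic assumption so that there are no resonances obstructing the contour deformation near the real axis from the ends); and (ii) handle the summation over $k$ and the low-frequency/elliptic contributions, which are routine. For the sharpness ("nut estimate") claim, I would run the construction in reverse: quasimodes concentrated at the semistable trapped set $x=1$ with the $(x-1)^{2m_2+1}$ degeneracy (and, separately, at the unstable set $x=0$ with the $x^{2m_1}$ degeneracy) produce, via a test-function/WKB argument, approximate eigenfunctions showing $\|\chi R(\lambda)\chi\|$ cannot decay faster than the stated power — this is the same family of quasimodes that furnishes the sharpness half of Theorem \ref{T:smoothing}.

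The main obstacle I anticipate is (i): carefully justifying the passage from the time-global (or time-local) smoothing estimate to a resolvent estimate \emph{right on the real axis}, i.e. $R(\lambda - i0)$, rather than at fixed distance $|\Im\lambda| \sim h$ from it. This requires knowing that the only obstruction to holomorphic continuation through $\Im\lambda = 0$ comes from the trapped sets (the ends being genuinely non-trapping, hence resonance-free in a strip of size $h^N$, say), and then a limiting-absorption-type argument to take $\Im\lambda \downarrow 0$ while preserving the polynomial bound. In the degenerate setting this is where one must be most careful, because the gluing construction underlying \eqref{glued} — separating the two trapped sets at distinct semiclassical energies — must be compatible with the contour deformation; but as remarked after Theorem \ref{T:smoothing}, the stable/unstable manifolds of the (higher-energy) unstable trapped set form a separatrix isolating the inflection-transmission set, so the microlocal partition of unity used for the smoothing estimate carries over verbatim, and the two rates simply combine by a maximum.
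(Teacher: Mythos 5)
Your proposal inverts the logical structure of the paper and in doing so introduces both a conceptual gap and an arithmetic error.

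The paper never goes in the direction ``local smoothing implies resolvent estimate.''  It goes the other way: the core technical objects are the microlocal invertibility estimates of Lemmas~\ref{L:m0-inv} and~\ref{L:ml-inv}, which (together with the commutator/gluing argument in Section~2) produce the weighted semiclassical resolvent estimate~\eqref{glued}, $\|\rho_{-s}(P-z)u\|\geq c\,h^{2\beta(m_1,m_2)}\|\rho_s u\|$.  The $TT^*$ argument in the ``high frequency estimate'' subsection then \emph{consumes} this resolvent estimate to produce the local smoothing statement.  Theorem~\ref{T:resolvent} follows from~\eqref{glued} essentially for free: undo the semiclassical scaling $P-z=h^2(P_k-\lambda^2)$, $h=k^{-1}$, $\lambda\sim h^{-1}$, and glue to the Euclidean ends.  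Proposing to now re-derive the resolvent bound from the smoothing theorem is at best redundant and at worst circular, since the smoothing theorem was obtained from that very resolvent bound; it does not constitute an independent proof.

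Moreover, your scaling is off.  You claim the semiclassical resolvent bound corresponding to a smoothing loss $\beta$ is $h^{-1+2\beta}$.  Since both values of $\beta$ in~\eqref{beta} lie in $[1/2,1)$, this would make $\|\chi(P-z)^{-1}\chi\|$ bounded or even $o(1)$ as $h\downarrow 0$, which is impossible at a trapped energy.  The correct relation is: the $TT^*$ reduction in the paper shows $\|\chi\psi(\tau\pm i0 + P_k)^{-1}\psi\chi\|\lesssim k^{-2r}$ with $r=1-\beta$, i.e.\ $\|\chi(P_k-\lambda^2)^{-1}\chi\|\lesssim \lambda^{-2(1-\beta)}$, equivalently $\|\chi(P-z)^{-1}\chi\|\lesssim h^{-2\beta}$.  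With $\beta=m_1/(m_1+1)$ this gives $\lambda^{-2/(m_1+1)}$, and with $\beta=(2m_2+1)/(2m_2+3)$ it gives $\lambda^{-4/(2m_2+3)}$, matching the theorem; your exponent $1-2\beta$ does not.  (The discrepancy is that $P_k-\lambda^2 = h^{-2}(P-z)$ costs a factor $h^2$, not $h$.)

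Finally, you correctly flag the hardest step in your proposed direction --- passing from a finite-time-integrated smoothing bound to a pointwise-in-$\lambda$ estimate for the boundary value $\chi R(\lambda-i0)\chi$ --- but you do not resolve it.  Fourier transform in time of a finite-time integral gives a smeared (in $\tau$) bound on the spectral measure $\chi\,dE_\tau\,\chi = \frac{1}{2\pi i}\chi[R(\tau+i0)-R(\tau-i0)]\chi$, not directly the one-sided outgoing resolvent.  Upgrading requires precisely the kind of weighted estimate~\eqref{glued} you are trying to avoid using, or a separate Mourre/limiting-absorption argument.  The paper sidesteps all of this by proving~\eqref{glued} directly; that is the proof you should be giving.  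The sharpness half of your argument is fine: the quasimodes of Section~3 (and their analogues at $x=0$) do saturate the resolvent bound, exactly as in \cite{ChWu-lsm}.
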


\subsection*{Acknowledgements} The first author was supported, in
part, by NSF grant DMS-0900524, and the second author by NSF grant
DMS-1054289.

\section{Local smoothing estimates}

In this section, we shall prove the main local smoothing estimate.  In
the subsequent section, we shall saturate the inequality, thus showing
that the loss is sharp.  The proof of the estimate uses a positive commutator
argument.  On Euclidean space, such a proof of local smoothing is
well-known, though the interested reader can see \cite[Section
2.1]{ChWu-lsm} for an exposition which is quite akin to what follows.

We first show
\begin{proposition}
\label{P:smoothing}

Suppose $u$ solves 
\begin{equation}
\label{E:tDelta-Sch}
\begin{cases} (D_t -\tDelta ) u = 0, \\
u(0,x, \theta) = u_0.
\end{cases}
\end{equation}
Then for any $T<\infty$, there exists a constant
$C_T >0$ such that 
\begin{multline*}
\int_0^T  \Bigl( \| \lll x \rrr^{-1} \partial_x u \|_{L^2(dx\,d\theta)}^2 + \| \lll x
\rrr^{-3/2} \partial_\theta u \|_{L^2(dx\,d\theta)}^2 \Bigr) \, dt \\
 \leq C_T \Bigl(\| \lll D_\theta
\rrr^{\beta(m_1, m_2) } u_0 \|_{L^2(dx\,d\theta)}^2 + \| \lll D_x \rrr^{1/2} u_0 \|_{L^2(dx\,d\theta)}^2\Bigr),
\end{multline*}
where $\beta(m_1,m_2)$ is as in \eqref{beta}.
\end{proposition}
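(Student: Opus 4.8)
The plan is to run a positive commutator (Mourre-type) argument after separating variables, reducing the PDE on $X$ to a family of semiclassical one-dimensional problems indexed by $h = |k|^{-1}$ where $k$ is the angular frequency, and to prove a uniform resolvent-type estimate for each $P(h) = -h^2 \partial_x^2 + V(x)$ with the two-bump effective potential $V(x) = A^{-2}(x) + h^2 V_1(x)$.

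\textbf{Step 1: Reduction to the semiclassical model.} First I would expand $u$ in its Fourier series in $\theta$, $u = \sum_k u_k(t,x) e^{ik\theta}$, and observe that since $\tDelta$ commutes with $D_\theta$, the estimate decouples into a sum over $k$. For each $k \neq 0$ set $h = |k|^{-1}$; then $u_k$ solves a one-dimensional Schr\"odinger equation $(D_t + h^{-2} P(h)) u_k = 0$ in the variable $x$ (after rescaling time). The left side of the desired estimate becomes, for each mode, a weighted space-time norm of $\partial_x u_k$ and of $k u_k$; the right side becomes $\langle k \rangle^{\beta} \| u_k(0)\|^2 + \| \langle D_x \rangle^{1/2} u_k(0)\|^2$. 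The standard way to prove such a local smoothing bound by positive commutators is to produce a bounded self-adjoint (pseudodifferential or differential) escape function $B = B_h$ such that the commutator $[\tDelta, B]$ (equivalently $h^{-2}[P(h), B_h]$) dominates the weighted square of the gradient microlocally, i.e. $i[P(h), B_h] \geq c\, W(x)(h^2 D_x^2 + 1) - $ (errors supported near the trapped sets), and then integrate $\frac{d}{dt}\langle B u, u\rangle$ in time.

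\textbf{Step 2: Microlocalization separating the two trapped sets.} The key geometric point (flagged in Remark~\ref{separates}) is that the two critical points of $A'$, at $x = 0$ and $x = 1$, sit at \emph{different} semiclassical energies: the barrier heights $A^{-2}(0) = 1$ and $A^{-2}(1) = C_1^{-1} < 1$ differ. So at any fixed energy $z$ near a maximum, the corresponding trapped trajectory lies near only one of the two bumps, and a microlocal partition of unity in $(x,\xi)$ (or equivalently a spectral cutoff in $z = h^2\lambda^2$) reduces the analysis to two independent one-bump problems: one near $x = 0$ where $A^2 \sim 1 + x^{2m_1}$, giving a maximum of order $2m_1$ for the effective potential, and one near $x = 1$ where $A^2 \sim C_1 + c_2(x-1)^{2m_2+1}$, giving a degenerate \emph{inflection} rather than a maximum, of order $2m_2 + 1$. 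Away from both bumps the geometry is nontrapping/asymptotically Euclidean and the classical Euclidean commutant handles it with no loss.

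\textbf{Step 3: The two local models and the loss.} For the maximum of order $2m_1$ at $x=0$ this is exactly the degenerate-hyperbolic situation of \cite{ChWu-lsm}, whose escape-function construction yields a loss of $\frac{m_1}{m_1+1}$ derivatives in the angular frequency (equivalently resolvent bound $\lambda^{-2/(m_1+1)}$). For the inflection point of order $2m_2+1$ at $x=1$ — where the effective potential has a one-sided maximum, unstable on one side and only semistable on the other — I would build an escape function adapted to the normal form $\xi^2 \pm x^{2m_2+1}$: on the unstable side one gains the usual monotonicity, on the semistable side trajectories slide off in infinite time but with polynomial-in-$h$ rate governed by the exponent $2m_2+1$, which after integrating the commutator inequality produces the loss $\frac{2m_2+1}{2m_2+3}$. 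Taking the worse of the two, and summing the dyadic/Littlewood--Paley pieces in $k$, gives $\beta(m_1,m_2) = \max\!\left(\frac{m_1}{m_1+1}, \frac{2m_2+1}{2m_2+3}\right)$; the $\| \langle D_x\rangle^{1/2} u_0\|^2$ term on the right absorbs the non-trapped, Euclidean contribution exactly as in the classical estimate.

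\textbf{Main obstacle.} The delicate step is the escape-function construction at the \emph{inflection} point $x = 1$: unlike a genuine nondegenerate or even degenerate maximum, here the effective potential is monotone on one side, so the natural "logarithmic" commutant used at a maximum is only one-sided, and one must patch it across $x = 1$ with a commutant on the semistable side whose positivity is quantitative and scales correctly with $h$ to produce precisely the exponent $2m_2+1$. Getting the matching of these two commutants — and checking that the glued escape function $B_h$ is uniformly bounded on $L^2$ with the right operator norm in $h$ so that the boundary term $\langle B_h u, u\rangle$ contributes no more than the asserted loss — is where the real work lies. Once that local estimate is in hand, the assembly (Step 1, Step 2, and the dyadic sum over $k$) is routine, and the passage back from $\tDelta$ to $\Delta$ via the isometry $L$ converts Proposition~\ref{P:smoothing} into Theorem~\ref{T:smoothing}.
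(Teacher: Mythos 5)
Your architecture matches the paper's at the level of assembly: commutator estimate giving full smoothing away from $x=0,1$, Fourier decomposition in $\theta$, reduction of each mode to a semiclassical cutoff resolvent/invertibility bound, and separation of the two trapped sets because they sit at distinct energies ($1$ versus $C_1^{-1}$), with the $x=0$ piece quoted from \cite{ChWu-lsm}. (The paper reaches the resolvent bound \eqref{E:cutoff-inverse} via a high/low frequency splitting in $D_x$, a $TT^*$ argument and Plancherel in time, rather than a per-mode time-dependent escape function, but that difference is cosmetic.) The genuine gap is that the entire new content of the proposition --- the microlocal invertibility estimate at the inflection point, Lemma \ref{L:ml-inv}, with the precise loss $h^{(4m_2+2)/(2m_2+3)}$ --- is exactly what you defer to your ``main obstacle'' paragraph and never prove. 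Asserting that an escape function adapted to $\xi^2-c(x-1)^{2m_2+1}$ ``produces the loss $(2m_2+1)/(2m_2+3)$'' is not an argument: the exponent is the whole theorem, and nothing in your sketch explains where it comes from.

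Moreover, the heuristic you do offer --- a one-sided ``logarithmic'' commutant on the unstable side patched across $x=1$ with a separate commutant on the semistable side --- is not how the estimate is obtained, and it is not clear it can be made to work, since any matching across $x=1$ must be done at a scale in $h$ that itself determines the loss. The paper's construction is a single escape function: after the anisotropic rescaling $X-1=(x-1)/(h/\tilde{h})^{\alpha}$, $\Xi=\xi/(h/\tilde{h})^{\beta}$ with $\alpha=2/(2m_2+3)$, $\beta=(2m_2+1)/(2m_2+3)$ (chosen so $\xi^2$ and $(x-1)^{2m_2+1}$ scale alike and $\alpha+\beta=1$, which is where the exponent actually enters), one takes $a=\Lambda(\Xi)\Lambda_2(X-1)\chi(x-1)\chi(\xi)$ with $\Lambda_2>0$ increasing and bounded, \emph{not} odd. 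The key structural point is that $\partial_x V$ vanishes to the even order $2m_2$ at $x=1$ and hence has a fixed sign there, so $\hamvf a$ is globally nonnegative near the trapped set and comparable to $\Xi^2+(X-1)^{2m_2}$; the quantitative lower bound then comes from the operator positivity Lemma \ref{lemma:positivity0} in the second parameter $\tilde{h}$, and one still must control the composition remainders in the marginal two-parameter calculus (Lemma \ref{L:Q-comm-error}) before Cauchy--Schwarz closes the argument. Without this (or some substitute of equal precision), your proof of Proposition \ref{P:smoothing} is incomplete at its central step. A minor further point: the mode-by-mode reduction uses Plancherel in $\theta$, not a dyadic Littlewood--Paley summation in $k$.
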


The equation \eqref{E:tDelta-Sch} is obtained by conjugating the
original equation by the operator $L$.  Upon conjugating back, Proposition
\ref{P:smoothing} shows that the estimate of Theorem \ref{T:smoothing}
holds.


\subsection{Proof of Proposition \ref{P:smoothing}}
The proof will be broken into three steps.  The first is to use a
positive commutator argument to prove full smoothing away from the
periodic orbits at $x=0$ and $x=1$.  We then expand into a Fourier
series to reduce to a one dimensional problem, and we reduce the
problem to understanding the high frequency part.  Using a $TT^*$ argument, gluing
techniques and a semiclassical rescaling, we show that the high frequency
estimate follows from a cutoff resolvent estimates near each instance of trapping
and subsequently prove those.

\subsubsection{The estimate away from $x=0$ and $x=1$}
For a self-adjoint operator $\tilde{\Delta}$ and a time-independent, self-adjoint multiplier $B$, we
have
\begin{equation}\label{posComm}\frac{d}{dt}\la u, Bu\ra = -2\Im\la (D_t-\tilde{\Delta})u, Bu\ra +i \la
[\tilde{-\Delta}, B]u,u\ra.\end{equation}
In particular, if
\[B=\frac{1}{2}\arctan(x)D_x + \frac{1}{2}D_x \arctan(x),\]
then
\[i[-\tDelta,B] =  2 D_x \lll x \rrr^{-2} D_x + 2 D_\theta A' A^{-3}
\arctan(x) D_\theta  -\frac{3x^2-1}{\la
  x\ra^6}  - V_1'
\arctan(x).\]
Upon integrating \eqref{posComm} over $[0,T]$, we obtain
\[\int_0^T \la i[-\tDelta,B]u,u\ra\,dt = i\la u, \arctan(x) \partial_x
u\ra\Bigl|_0^T - \frac{i}{2}\la u, \la x\ra^{-1} u\ra\Bigl|_0^T\]
for a solution $u$ to \eqref{E:tDelta-Sch}.
Using energy estimates, the right side is controlled by
$\|u_0\|^2_{H^{1/2}}$.  Noting also that energy estimates permit the
control
\[\Bigl|\int_0^T \Bigl\langle \frac{3x^2-1}{\la x\ra^{6}} u +
V_1'\arctan(x)u,u\Bigr\rangle\,dt\Bigr|\leq C T \sup_{0\le t\le T}
\|u(t,\cd)\|^2_{L^2}\leq C_T \|u_0\|^2_{H^{1/2}},\]
it now follows from integration by parts that we have established
\[\int_0^T \Bigl(\|\la x\ra^{-1}\partial_x u\|^2_{L^2} + \|\sqrt{A'
  A^{-3} \arctan(x)}\partial_\theta u\|^2_{L^2}\Bigr)\,dt \leq C_T \|u_0\|^2_{H^{1/2}}.\]

We observe that 
\[
A' A^{-3} \arctan(x) \geq 0
\]
and satisfies
\[
A' A^{-3} \arctan(x) \sim \begin{cases} x^{2m_1}, \quad x \sim 0, \\
c_2' (x-1)^{2m_2}, \quad x \sim 1, \\
|x|^{-3}, \quad | x | \to \infty. \end{cases}
\]
Thus,
\[
\| |x|^{m_1} |x-1|^{m_2}\lll x \rrr^{-m_1 -m_2-3/2} \partial_\theta u
\|_{L^2} \leq C \|
\sqrt{A' A^{-3} \arctan(x) } \partial_\theta u \|_{L^2},
\]
and hence we have the estimate
\begin{equation}
\label{E:est-away-0}
\int_0^T \Bigl(\| \lll x \rrr ^{-1} \partial_x u \|_{L^2}^2 + \|
|x|^{m_1} |x-1|^{m_2}\lll x \rrr^{-m_1 -m_2-3/2} \partial_\theta u
\|_{L^2}^2\Bigr ) \, dt \leq C_T \| u_0\|_{H^{1/2}}^2.
\end{equation}
That is, we have perfect smoothing in the radial direction and in the
$\theta$ direction away from $x = 0$ and $x = 1$, which is precisely
where the trapped sets reside.


\subsubsection{Fourier decomposition}
To get an estimate in the directions
tangential to the trapping, we decompose into Fourier series
\[u(t,x,\theta) = \sum_k e^{ik\theta} u_k(t,x)\]
and
\[u_0(x,\theta) = \sum_k e^{ik\theta} u_{0,k}(x).\]
By Plancherel's theorem, it suffices to show
\begin{multline*}
\int_0^T  \Bigl( \| \lll x \rrr^{-1} \partial_x u_k \|_{L^2(dx)}^2
+ k^2 \| \lll x
\rrr^{-3/2}  u_k \|_{L^2(dx)}^2 \Bigr) \, dt \\
 \leq C_T \Bigl(\| \lll k
\rrr^{\beta(m_1, m_2) } u_{0,k} \|_{L^2(dx)}^2 + \| \lll D_x \rrr^{1/2} u_{0,k} \|_{L^2(dx)}^2\Bigr).
\end{multline*}

We note that as $\partial_\theta u_0=0$, where in an abuse of notation
$u_0$ here stands for the zero mode, the estimate when $k=0$ follows
trivially from \eqref{E:est-away-0}.  Thus, it remains to show
\[
\int_0^T \| \chi(x) k u_k \|_{L^2(\reals)}^2 \, dt \leq C_T\Bigl(  \| \lll k \rrr^{\beta(m_1,m_2)}
u_{0,k} \|_{L^2}^2 + \| u_{0,k} \|_{H^{1/2}}^2\Bigr), \quad |k|\geq 1
\]
for some $\chi \in \Ci_c( \reals)$ with $\chi (x) \equiv 1$ in a
neighborhood of $x=0$ and also in a neighborhood of $x=1$.

In the sequel, we shall be working with a fixed $k$ and as such shall
drop the subscript notation.  Set 
\[P_k = D_x^2 + A^{-2}(x)k^2 + V_1(x).\]
Notice that $P_k$ is merely $-\tDelta$ applied to the $k$th mode.
We fix an even function $\psi\in \C_c^\infty(\R)$ which is $1$ for
$|r|\le \epsilon$ and vanishes for $|r|\ge 2\epsilon$ where
$\epsilon>0$ will be determined later.  Then let
\[u = u_{hi} + u_{lo},\quad u_{hi}=\psi(D_x/k) u.\]

\subsubsection{Low frequency estimate}
We first examine $u_{lo}$ and reduce estimating it to understanding a
bound for $u_{hi}$.
We observe that $u_{lo} = (1-\psi(D_x/k))u$ solves
\[(D_t + P_k) u_{lo} = -[P_k, \psi(D_x/k)] u= k\la x\ra^{-1}
L_k \la x\ra^{-2} \tilde{\psi}(D_x/k)u\]
where $L_k$ is $L^2$ bounded uniformly in $k$ and $\tilde{\psi}\in
\C^\infty_c$ which is identity on the support of $\psi$.

Choosing the same multiplier $B$, replacing $-\tDelta$ with the
self-adjoint $P_k$, and integrating \eqref{posComm} yields
\begin{multline}\label{lowComm}
  \Bigl|\int_0^T \la [P_k,B]u_{lo},u_{lo}\ra\,dt\Bigr| \leq C\Bigl( |\la
  u_{lo}, \arctan(x)\partial_x u_{lo}\ra|\Bigl|_0^T + |\la u_{lo},\la
  x\ra^{-1} u_{lo}\ra|\Bigl|_0^T
\\+ \Bigl|\int_0^T \la \la x\ra^{-1} k L_k \la x\ra^{-2} \tilde{\psi}(D_x/k)u,Bu_{lo}\ra\,dt\Bigr|\Bigr).
\end{multline}
Continuing to argue as above shows that
\[
  \int_0^T \|\la x\ra^{-1}\partial_x u_{lo}\|_{L^2}^2\,dt \leq C_T
  \Bigl(\|u_0\|^2_{H^{1/2}}
 + \Bigl|\int_0^T \la \la x\ra^{-1} k L_k \la x\ra^{-2} \tilde{\psi}(D_x/k)u,Bu_{lo}\ra\,dt\Bigr|\Bigr).
\]
Applying the Schwarz inequality to the last term and bootstrapping, we obtain
\begin{equation}\label{lowComm2}
 \int_0^T \|\la x\ra^{-1}\partial_x u_{lo}\|_{L^2}^2\,dt \leq C_T
  \Bigl(\|u_0\|^2_{H^{1/2}}+
\int_0^T \| k \la x\ra^{-2} \tilde{\psi}(D_x/k)
u\|^2_{L^2}\,dt\Bigr).
\end{equation}
The frequency cutoff guarantees that
\[\int_0^T \|\la x\ra^{-1} k u_{lo}\|^2_{L^2}\,dt \leq C \int_0^T
\|\la x\ra^{-1} \partial_x u_{lo}\|^2_{L^2}\,dt.\]
As \eqref{E:est-away-0} provides control on the last term in
\eqref{lowComm2} away from $x=0$ and $x=1$,
it suffices to prove
\[\int_0^T \|\chi k \tilde{\psi}(D_x/k) u\|^2_{L^2}\,dt \leq C_T
\|k^{\beta(m_1,m_2)} u_0\|^2_{L^2}.\]
Here $\chi$ is a cutoff which is $1$ in a neighborhood of the trapped
geodesics at $x=0$ and $x=1$.  The desired bound, thus, will follow
once $u_{hi}$ is controlled as the precise choice of cutoff $\psi$ is inessential.


\subsubsection{High frequency estimate}
It remains to estimate $u_{hi}$ in the vicinity of $x=0$ and $x=1$.
We fix a cutoff $\chi\in \C^\infty_c(\R)$ which is $1$ in a
neighborhood of $x=0$ and in a neighborhood of $x=1$.  Let
\[F(t)g = \chi(x) \psi(D_k/k) k^{r} e^{-itP_k} g,\]
where the constant $r>0$ will be determined later.  We seek to
determine $r$ so that $F:L^2_x \to L^2([0,T];L^2_x)$ as
\begin{equation}\label{E:l-sm-r}\|k^{1-r}
  F(t)u_0\|_{L^2([0,T];L^2_x)}\leq C_T \|k^{1-r} u_0\|_{L^2}\end{equation}
is a local smoothing estimate.  $F$ is such a mapping if and only if
$F F^*:L^2L^2\to L^2L^2$, where we have abbreviated $L^2([0,T];L^2_x) = L^2L^2$.  A straightforward computation shows
that
\[F F^* f(t,x) = \chi(x)\psi(D_x/k) k^{2r} \int_0^T
e^{-i(t-s)P_k} \psi(D_x/k) \chi(x) f(s,x)\,ds,\]
and 
\[\|F F^* f\|_{L^2L^2}\leq C_T \|f\|_{L^2L^2}\]
is the desired estimate.  
We write $F F^* f(t,x) = \chi(x) \psi(D_x/k) (v_1+v_2)$
where
\begin{align*}
  v_1 &= k^{2r} \int_0^t e^{-i(t-s)P_k} \psi(D_x/k) \chi(x)
  f(s,x)\,ds,\\
v_2 &= k^{2r} \int_t^T e^{-i(t-s)P_k}\psi(D_x/k) \chi(x) f(s,x)\,ds.
\end{align*}
Thus,
\[(D_t+P_k) v_{l} = (-1)^{l} i k^{2r} \psi(D_x/k) \chi(x) f,
\quad l=1,2 \]
and
\[\|\chi \psi v_{l}\|_{L^2L^2}\leq C_T \|f\|_{L^2L^2}\]
would imply the desired estimate.  By Plancherel's theorem, this is
equivalent to showing
\[\|\chi \psi \hat{v}_{l}\|_{L^2L^2}\leq C_T \|\hat{f}\|_{L^2L^2}\]
where $\hat{f}$ denotes the Fourier transform of $f$ in the time variable.
I.e., we are required to show that
\[\|\chi \psi k^{2r}(\tau\pm i0 + P_k)^{-1} \psi
\chi\|_{L^2_x\to L^2_x} = O(1)\]
uniformly in $\tau$.  Setting, as above, $-z=\tau k^{-2}$, $h=k^{-1}$, and $V=A^{-2}(x)+h^2 V_1(x)$, we
need
\begin{equation}
\label{E:cutoff-inverse}
\| \chi(x)\psi(hD_x) (-z \pm i 0+ (hD_x)^2 + V)^{-1} \psi(hD_x) \chi(x) 
\|_{L^2 \to L^2} \leq C h^{-2(1-r)}.
\end{equation}

We recall
\[P = (hD_x)^2+V\]
and shall use gluing techniques to reduce proving
\begin{equation}\label{glued}\|\rho_{-s}(P-z)u\|_{L^2} \geq c
  h^{-2\beta(m_1,m_2)} \|\rho_s u\|_{L^2},\quad s<-1/2\end{equation}
which implies \eqref{E:cutoff-inverse} with $1-r=\beta(m_1,m_2)$ as desired,
to proving microlocal invertibility estimates near the trapped sets.  
In
\eqref{glued}, $\rho_s$ is a smooth function that is identity on a
large compact set and is equivalent to $\la x\ra^s$ near infinity.

The gluing techniques that we shall employ are outlined in
\cite{Chr-inf-deg}.  See also \cite[Proposition
2.2]{Chr-disp-1} and \cite{DaVa-gluing}.  

Recall that we are working in $T^*\R$ with principal symbol
$p=\xi^2+V(x)$ where the potential $V(x)$ is a short range
perturbation of $x^{-2}$ and has critical points at precisely
$x=0,1$.  The critical point at $x=0$ is a maximum with value $1$ while the critical
point at $1$ is an inflection point with potential value $C_1^{-1}$.
This means that in terms of the Hamiltonian vector field, $H_p$, the
level set $\{p=1\}$ contains the critical point $(0,0)$ and the level
set $\{p=C_1^{-1}\}$ contains the critical point $(1,0)$.
Furthermore, $\pm V'(x)\le 0$ for $\pm x\ge 0$ with equality only at
these critical points.

As in \cite{Chr-inf-deg}, we fix a few cutoffs.  Let $M>1$ be
sufficiently large so that there is a symbol $p_0$ such that $p_0=p$
for $|x|\ge M-1$ and the operator $P_0$ associated to symbol $p_0$
satisfies 
\[\|\rho_{-s}(P_0-z)u\|_{L^2} \geq c
  \frac{h}{\log(1/h)} \|\rho_s u\|_{L^2}.\]
Such a $P_0$ is, e.g., the $m=1$ case of \cite{ChWu-lsm} and such bounds
follow from \cite{Chr-NC, Chr-QMNC}.
Here $\rho_s$ is a smooth function such that $\rho_s>0$,
$\rho_s(x)\equiv 1$ on a neighborhood of $\{|x|\le 2M\}$, and
$\rho_s\equiv \la x\ra^{s}$ for $x$ sufficiently large.  We choose
$\Gamma\in \C_c^\infty(\R)$ with $\Gamma\equiv 1$ on $\{|x|\le M-1\}$
wth support in $\{|x|\le M\}$.  In particular, $p=p_0$ on supp$(1-\Gamma)$.

Let
\begin{equation}\label{lambda}\Lambda(r):=\int_0^r \la t\ra^{-1-\epsilon_0}\,dt,
\end{equation}
for some fixed $\epsilon_0>0$, which is a function chosen to be
globally bounded with positive derivative and $\Lambda(r)\sim r$ near
$r=0$.  Then let
\[a(x,\xi)=\Lambda(x)\Lambda(\xi),\]
so that
\begin{align*}
  H_p a &= (2\xi\partial_x - V'(x)\partial_\xi)a\\
&= 2\xi\Lambda(\xi)\Lambda'(x) - V'(x)\Lambda(x)\Lambda'(\xi).
\end{align*}
Since $\pm V'(x)<0$ for $\pm x>0$, $x\neq 1$, for any $\epsilon>0$ we
have
\begin{equation}\label{lbhp}
H_p a \ge c_0 > 0,\quad |x|\in [\epsilon/2,1-(\epsilon/2)]\cup [1+(\epsilon/2),M].\end{equation}
We
further have
\begin{equation}\label{lbhp2}H_p a\ge c_0' >0,\quad |\xi|\ge \delta>0 \text{ and } |x|\le M.
\end{equation}

For $j=0,1$, let $\Gamma_j(x)$ be equal to $1$ for $|x-j|\leq
\epsilon/2$ with support in $\{|x-j|\leq \epsilon\}$.  And set
\[\Gamma_2 = \Gamma - \Gamma_0 - \Gamma_1\]
so that $\Gamma_2$ is supported where $|x|\in
[\epsilon/2,1-(\epsilon/2)]\cup[1+(\epsilon/2),M]$.  

We may use a commutator argument to prove the necessary microlocal
black box estimate for $\Gamma_2$.  Indeed, for any $z\in \R$, using \eqref{lbhp},
\begin{align*}
  2\Im\la(P-z)\Gamma_2u,a^w \Gamma_2u\ra &=  -i\la(P-z)\Gamma_2 u, a^w
  \Gamma_2 u\ra +i\la a^w \Gamma_2 u,(P-z)\Gamma_2 u\ra\\
&= i\la [P,a^w]\Gamma_2 u,\Gamma_2 u\ra \geq c_1 h\la
\Gamma_2u,\Gamma_2 u\ra
\end{align*}
for some $c_1>0$.  Then
\[c_1 h \|\Gamma_2 u\|^2 \le 2 \|(P-z)\Gamma_2 u\|\|a^w\Gamma_2 u\|
\le C\|(P-z)\Gamma_2 u\|\|\Gamma_2 u\|,\]
so that
\[\|\Gamma_2 u\|\le C'h^{-1} \|(P-z)\Gamma_2 u\|,\]
as desired.

We now choose two microlocal cutoffs.  For $j=0,1$, let
$\psi_j=\psi_j(\xi^2 + V(x))$ be functions of the principal symbol
$p$.  For some $\delta>0$ to be fixed momentarily, assume
$\psi_0\equiv 1$ for $\{|p-1|\leq \delta\}$ with slightly larger
support and similarly $\psi_1\equiv 1$ for $\{|p-C_1^{-1}|\leq
\delta\}$ with slightly larger support.  The parameter $\delta>0$ may
now be fixed, depending on $\varepsilon>0$, so that $\psi_1\equiv 1$
on $\supp(\Gamma_1)\cap \{\xi\equiv 0\}$.  These cutoffs are depicted
in Figure \ref{fig:cutoffs1}.  Repeating the commutator
argument above but instead using \eqref{lbhp2} allows us to conclude
\[\|\Gamma_0 (1-\psi_0) u\|\le Ch^{-1} \|(P-z)\Gamma_0 (1-\psi_0)
u\|\]
and
\[\|\Gamma_1(1-\psi_1)u\|\le Ch^{-1}\|(P-z)\Gamma_1(1-\psi_1) u\|.\]

\begin{figure}
\hfill
\centerline{\input{cutoffs1}}
\caption{\label{fig:cutoffs1}  The cutoff functions used to apply \cite[Appendix]{Chr-inf-deg}.}
\hfill
\end{figure}

We also include a separate figure (Figure \ref{fig:cutoffs}) that
illustrates that such a microlocalization can be carried out in the
case described in Remark \ref{separates}.

\begin{figure}
\hfill
\centerline{\input{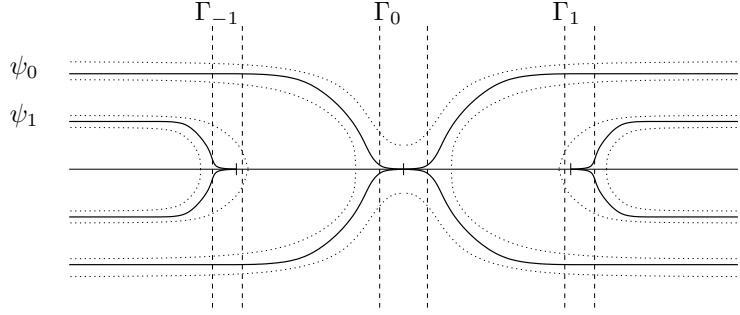}}
\caption{\label{fig:cutoffs}  The cutoff functions in the case
  described in Remark \ref{separates}.}
\hfill
\end{figure}

We may now conclude \eqref{glued} provided that we can establish
such microlocal invertbility estimates for $\Gamma_j \psi_j u$.

The invertibility estimate near $(0,0)$ has been proved
in \cite{Chr-NC,Chr-QMNC} for $m_1 = 1$ and in \cite{ChWu-lsm} for
$m_1 >1$.  For convenience, this is restated
\begin{lemma}
\label{L:m0-inv}
For $\epsilon>0$ sufficiently small, let $\phi \in \s(T^* \reals)$
have compact support in $\{ |(x,\xi) |\leq \epsilon\}$.  Then there
exists $C_\epsilon>0$ such that 
\begin{equation}
\label{E:m0-inv}
\| (P-z) \phi^w u \| \geq C_\epsilon
h^{2m_1/(m_1+1)}  \|
\phi^w u \|, \,\,\, z \in [1-\epsilon, 1 + \epsilon],
\end{equation}
if $m_1>1$.  If $m_1=1$, then $h^{2m_1/(m_1+1)}$ is replaced by $h/(\log(1/h))$.
\end{lemma}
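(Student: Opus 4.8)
The plan is to rescale near the fixed point $(0,0)$ down to the critical scale, reduce \eqref{E:m0-inv} to a resolvent bound for a one–dimensional barrier–top model, and then invoke the model estimates of \cite{Chr-NC,Chr-QMNC,ChWu-lsm}. First I would dispose of the elliptic regime. Since $A^2(0)=1$ and $A^2(x)=1+\int_0^x a$ with $a(y)\sim y^{2m_1-1}$ near $0$, the function $g(x):=1-A^{-2}(x)$ is non-negative, vanishes exactly to order $2m_1$ at $x=0$, and $V(x)=1-g(x)+h^2V_1(x)$; hence on $\supp\phi\subset\{|(x,\xi)|\le\epsilon\}$ we have $|p-1|=|\xi^2-g(x)+O(h^2)|\le C\epsilon^2$. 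Thus if $|z-1|\ge C_1\epsilon^2$ with $C_1$ large, then $P-z$ is elliptic on $\supp\phi$ and \eqref{E:m0-inv} is immediate from the symbol calculus (with a much better power of $h$, as $\epsilon$ is fixed). From now on assume $|z-1|\le C_1\epsilon^2$.

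Next, the rescaling. Set
\[
\mu:=\max\bigl(\,|z-1|^{1/(2m_1)},\ h^{1/(m_1+1)}\,\bigr)\in(0,1],\qquad x=\mu\tilde x,
\]
which, after conjugation by the unitary dilation $u\mapsto\mu^{1/2}u(\mu\,\cdot\,)$ and division by $\mu^{2m_1}$, turns $P-z$ into a semiclassical operator with parameter $\tilde h:=h\mu^{-(m_1+1)}\in(0,1]$,
\[
\mathcal{Q}:=\mu^{-2m_1}(P-z)=\tilde h^2D_{\tilde x}^2-c_0\tilde x^{2m_1}\bigl(1+O(\mu\tilde x)\bigr)+E+O(\tilde h^2),\qquad E:=(1-z)\mu^{-2m_1}\in[-1,1],
\]
with $c_0>0$, acting on functions microlocalized to $\{|\tilde x|\lesssim\epsilon/\mu\}$; its semiclassical principal symbol is $\tilde p=\tilde\xi^2-c_0\tilde x^{2m_1}+E$, to be inverted at energy $0$. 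Two cases occur: if $|z-1|\ge h^{2m_1/(m_1+1)}$ then $\mu=|z-1|^{1/(2m_1)}$, $E=\mp1$ and $\tilde h\le1$; if $|z-1|\le h^{2m_1/(m_1+1)}$ then $\mu=h^{1/(m_1+1)}$, $\tilde h=1$ and $\mathcal{Q}$ is an honest $h$-independent operator at a bounded energy near the trapping energy $0$.

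Now I analyze the model. In the first case ($E=\mp1$, bounded away from the trapping energy $0$): the fixed point $(0,0)$ is off $\{\tilde p=0\}$, every trajectory on that surface escapes to $|\tilde x|=\infty$ where the potential tends to $-\infty$, and a positive commutator argument with the dilation generator $G=\tfrac12(\tilde x\,\tilde hD_{\tilde x}+\tilde hD_{\tilde x}\,\tilde x)$ closes. A direct computation for the model (the $O(\mu\tilde x)$ correction being a lower-order perturbation) gives $i[\tilde h^2D_{\tilde x}^2-c_0\tilde x^{2m_1},G]=2\tilde h\bigl(\tilde h^2D_{\tilde x}^2+m_1c_0\tilde x^{2m_1}\bigr)$, whose symbol on $\{\tilde p=0\}$ equals $2\tilde h\bigl((m_1+1)c_0\tilde x^{2m_1}-E\bigr)\gtrsim\tilde h$, the bound being uniform even as the cutoff support grows because this symbol only increases at infinity; away from $\{\tilde p=0\}$ the operator $\mathcal{Q}$ is elliptic. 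Hence $\|\mathcal{Q}\,\tilde\phi^w\tilde u\|\gtrsim\tilde h\,\|\tilde\phi^w\tilde u\|$, and undoing the rescaling gives $\|(P-z)\phi^wu\|=\mu^{2m_1}\|\mathcal{Q}\,\tilde\phi^w\tilde u\|\gtrsim h\,\mu^{m_1-1}\|\phi^wu\|\ge h^{2m_1/(m_1+1)}\|\phi^wu\|$, using $\mu\ge h^{1/(m_1+1)}$ and $m_1-1\ge0$. In the second case ($\tilde h=1$), $\mathcal{Q}$ is a bounded perturbation of the honest barrier-top operator $D_{\tilde x}^2-c_0\tilde x^{2m_1}+E$ at the near-critical energy $0$, with cutoff supported in $\{|\tilde x|\lesssim\epsilon\,h^{-1/(m_1+1)}\}$; the required bound $\|\tilde\phi^w\mathcal{Q}^{-1}\tilde\phi^w\|=O(1)$ for $m_1>1$, uniformly for $E$ near $0$ and as the cutoff support grows, and $=O(\log(1/h))$ for $m_1=1$ (the logarithm coming from the non-degenerate hyperbolicity of the barrier top combined with the cutoff size $\sim h^{-1/2}$), is exactly the content of \cite{ChWu-lsm} for $m_1>1$ and of \cite{Chr-NC,Chr-QMNC} for $m_1=1$. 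Reassembling, $\|(P-z)\phi^wu\|=\mu^{2m_1}\|\mathcal{Q}\,\tilde\phi^w\tilde u\|\gtrsim h^{2m_1/(m_1+1)}\|\phi^wu\|$ for $m_1>1$, and $\gtrsim h\,(\log(1/h))^{-1}\|\phi^wu\|$ for $m_1=1$, which is the claim.

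The main obstacle is the near-critical-energy cutoff resolvent estimate for the barrier-top model $D_{\tilde x}^2-c_0\tilde x^{2m_1}+E$: this is where the degenerate ($m_1>1$) versus non-degenerate ($m_1=1$) distinction produces respectively a uniformly bounded cutoff resolvent or a $\log$ loss, and it is imported from \cite{Chr-NC,Chr-QMNC,ChWu-lsm} rather than re-proved here. The remaining issues — arranging the reduction so that the smooth correction $O(\mu\tilde x)$ to $c_0\tilde x^{2m_1}$ and the term $h^2V_1$ survive the rescaling as genuine lower-order perturbations (which they do, since $\mu\to0$ and $h^2=\tilde h^2\mu^{2(m_1+1)}$), and patching together the three energy windows $|z-1|\gtrsim\epsilon^2$, $h^{2m_1/(m_1+1)}\lesssim|z-1|\lesssim\epsilon^2$, $|z-1|\lesssim h^{2m_1/(m_1+1)}$ with uniform constants — are routine bookkeeping, though the uniformity of the non-trapping bound as the rescaled cutoff spreads must be checked with some care.
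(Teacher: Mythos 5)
The paper itself offers no proof of Lemma \ref{L:m0-inv}: it is quoted from \cite{Chr-NC,Chr-QMNC} (for $m_1=1$) and \cite{ChWu-lsm} (for $m_1>1$), and only the companion estimate at the inflection point, Lemma \ref{L:ml-inv}, is proved in this paper. Your proposal ultimately rests on the same citations, and this is where it is weakest as a proof: the ``barrier-top model estimate'' you defer to those references in the near-critical regime $|z-1|\lesssim h^{2m_1/(m_1+1)}$ is, once your rescaling $x=\mu\tilde{x}$, $\mu=h^{1/(m_1+1)}$ is undone, precisely the statement \eqref{E:m0-inv} itself. Moreover the form in which you invoke it --- a unit-scale cutoff resolvent bound for $D_{\tilde{x}}^2-c_0\tilde{x}^{2m_1}+E$, uniform in $E\in[-1,1]$ and over cutoff regions of radius $\sim h^{-1/(m_1+1)}$ (with a $\log$ allowance when $m_1=1$) --- is not what those papers literally prove; they prove \eqref{E:m0-inv} directly, with a fixed compactly supported microlocal cutoff and uniformly for all $z$ in a neighborhood of the critical value, via a two-parameter $(h,\tilde{h})$ calculus and a positive commutator with a globally bounded escape function of the form $\Lambda(X)\Lambda(\Xi)$ (the same scheme this paper uses for Lemma \ref{L:ml-inv}). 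So your three-regime reduction buys nothing over the paper's direct citation, and presented as a proof it is essentially circular at its core step.

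There is also a technical gap in the intermediate regime $h^{2m_1/(m_1+1)}\lesssim|z-1|\lesssim\epsilon^2$: your commutant is the dilation generator $G$, whose symbol $\tilde{x}\tilde{\xi}$ is unbounded on the rescaled support $\{|\tilde{x}|\lesssim\epsilon/\mu,\ |\tilde{\xi}|\lesssim\epsilon/\mu^{m_1}\}$. Positivity of the commutator symbol on $\{\tilde{p}=0\}$ does not by itself close the argument: the Cauchy--Schwarz step $\langle i[\mathcal{Q},G]v,v\rangle\leq 2\|\mathcal{Q}v\|\,\|Gv\|$ needs $\|Gv\|\lesssim\|v\|$, the G\aa rding remainders involve symbols growing like $\tilde{x}^{2m_1}$ on a region whose radius blows up, and when $\tilde{h}$ is of order one there is no semiclassical expansion at all, so the small-$\tilde{h}$ and unit-$\tilde{h}$ endpoints require genuinely different (Mourre-type) input. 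This is exactly the point at which the cited works, and this paper's proof of Lemma \ref{L:ml-inv}, replace $x\xi$ by a bounded $\Lambda$-type escape function quantized in the two-parameter calculus (compare Lemma \ref{lemma:positivity0} and the estimate of the error terms in Lemma \ref{L:Q-comm-error}); you flag the issue but do not resolve it. In short: the statement you reach is correct, and your rescaling picture is the right heuristic for why the exponent $2m_1/(m_1+1)$ (and the $\log$ for $m_1=1$) appears, but as written the crucial content is imported rather than proved, and the one step you do carry out in detail does not close without the bounded-escape-function machinery of the references.
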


We need only prove the corresponding estimate near $(1,0)$.  
This is also used to prove Theorem \ref{T:resolvent}.
\begin{lemma}
\label{L:ml-inv}
For $\epsilon>0$ sufficiently small, let $\phi \in \s(T^* \reals)$
have compact support in $\{ |(x-1,\xi) |\leq \epsilon\}$.  Then there
exists $C_\epsilon>0$ such that 
\begin{equation}
\label{E:ml-inv}
\| (P-z) \phi^w u \| \geq C_\epsilon
h^{(4m_2+2)/(2m_2+3)}  \|
\phi^w u \|, \,\,\, z \in [C_1^{-1}-\epsilon, C_1^{-1} + \epsilon].
\end{equation}
\end{lemma}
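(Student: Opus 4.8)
The plan is to reduce the microlocal invertibility estimate near the inflection point $(1,0)$ to a model problem by a semiclassical rescaling adapted to the degeneracy of the potential there, and then to analyze the model operator directly. First I would recall that near $x=1$ the potential satisfies $V(x) - C_1^{-1} \sim -c(x-1)^{2m_2+1}$ (from \eqref{A2}, since $A^2(x) \sim C_1 + c_2(x-1)^{2m_2+1}$ and hence $A^{-2}(x) = C_1^{-1} - C_1^{-2} c_2 (x-1)^{2m_2+1} + \cdots$), so after the harmless translation $x \mapsto x-1$ the principal symbol is $p - C_1^{-1} = \xi^2 - c(x)x^{2m_2+1}$ with $c(0) > 0$, i.e.\ a degenerate \emph{inflection} (not a critical point of Morse type, and not a maximum as at $x=0$). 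The natural rescaling is $x = h^{2/(2m_2+3)} X$, $\xi = h^{(2m_2+1)/(2m_2+3)}\Xi$; under this change of variables $\xi^2$ and $x^{2m_2+1}$ both scale like $h^{(4m_2+2)/(2m_2+3)}$, and the rescaled operator becomes $h^{(4m_2+2)/(2m_2+3)}\bigl(\DD_X^2 - c_0 X^{2m_2+1} + O(\text{error})\bigr)$ acting at a new effective semiclassical parameter $\tilde h = h^{2/(2m_2+3)} \to 0$. So \eqref{E:ml-inv} is equivalent to a \emph{lower bound of order $1$}, i.e.\ an $O(1)$ invertibility bound uniform in $\tilde h$, for the rescaled model operator $\widetilde P = \DD_X^2 - c_0 X^{2m_2+1}$ microlocalized near $(0,0)$ on a ball of fixed size, at energies near $0$.

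The heart of the matter is then the model estimate: for $Q = \DD_X^2 - c_0 X^{2m_2+1}$ with $c_0 > 0$, and $\phi$ compactly supported microlocally near the inflection point $(0,0)$ of its symbol $q = \Xi^2 - c_0 X^{2m_2+1}$, one has $\|(Q-w)\phi^w v\| \geq C\|\phi^w v\|$ uniformly for $w$ in a fixed small neighborhood of $0$ and uniformly in the (small) semiclassical parameter. This is exactly the kind of normal-form invertibility statement that appears in \cite{Chr-inf-deg} and \cite{ChWu-lsm}; indeed the inflection $q = \Xi^2 - c_0 X^{2m_2+1}$ with odd exponent $2m_2+1$ is the one-sided (semistable) degenerate analogue of the degenerate maximum $\Xi^2 - X^{2m}$ that governs Lemma \ref{L:m0-inv}. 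I would establish it by a positive-commutator / escape-function argument: the key point is that $H_q = 2\Xi\,\partial_X + (2m_2+1)c_0 X^{2m_2}\partial_\Xi$ has no closed orbits through $(0,0)$ on $\{q = 0\}$ --- the orbit enters along the stable direction and leaves along the unstable direction, since the inflection is not a trapped maximum --- so one can construct a symbol $b$ with $H_q b \geq c > 0$ on a full neighborhood of $(0,0)$ on the relevant energy shell (this is the place where the \emph{odd} exponent is essential; for an even exponent one would only get a degenerate maximum and a logarithmic, not $O(1)$, bound). The operator $b^w$ being bounded then yields $\|\phi^w v\| \leq C\|(Q-w)\phi^w v\| + (\text{error from cutoff commutators and lower-order terms})$, and the errors are absorbed for the semiclassical parameter small and the microsupport small.

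I expect the main obstacle to be twofold. First, one must verify carefully that the error terms generated by the rescaling --- the contributions of $h^2 V_1$ to $V$, the Taylor remainder in $A^{-2}(x) - C_1^{-1} + C_1^{-2}c_2(x-1)^{2m_2+1}$, and the commutators with the cutoff $\phi$ --- are genuinely lower order \emph{after} the anisotropic rescaling, i.e.\ that they do not destroy the $O(1)$ bound; this requires tracking how each term scales in the new variables $(X,\Xi)$ and new parameter $\tilde h$, and confirming the surviving symbol is $(\DD_X^2 - c_0 X^{2m_2+1} - w) + o(1)$ in the appropriate rescaled symbol class. Second, and more delicate, is the construction of the global escape function $b$ valid on a full two-dimensional neighborhood of $(0,0)$ rather than away from $\{\Xi = 0\}$: one needs a quantitatively good monotone function along the Hamilton flow even at the degenerate point, handled as in \cite{Chr-inf-deg} by a suitable blow-up/weighting of the naive escape function $\Lambda(X)\Lambda(\Xi)$ used in \eqref{lambda}--\eqref{lbhp2} but now pushed into the region $|x-1| \le \epsilon/2$, $|\xi|\le\delta$ that the earlier commutator arguments deliberately left uncontrolled. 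Once $b$ is in hand, undoing the rescaling turns the model $O(1)$ bound into the claimed $h^{(4m_2+2)/(2m_2+3)}$ in \eqref{E:ml-inv}, completing the proof.
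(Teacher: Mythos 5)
Your reduction collapses at what you call the heart of the matter. After rescaling, the model symbol is $q=\Xi^2-c_0X^{2m_2+1}$, and $(0,0)$ is a \emph{degenerate critical point} of $q$: both $\partial_Xq$ and $\partial_\Xi q$ vanish there, so the Hamilton vector field $H_q$ vanishes at $(0,0)$ and consequently $H_qb(0,0)=0$ for \emph{every} smooth $b$. There is therefore no escape function with $H_qb\geq c>0$ on a full neighborhood of $(0,0)$, and the heuristic ``no closed orbits through $(0,0)$, the orbit enters and leaves'' is wrong: the point $(0,0)$ is a fixed point of the flow, trapped in a semistable way, which is precisely the source of the loss in Theorem \ref{T:smoothing} and is where the quasimodes of Section 3 (Theorem \ref{T:sharp}) concentrate. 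The paper's proof accepts this degeneracy rather than trying to remove it: the commutant $a=\Lambda(\Xi)\Lambda_2(X-1)\chi(x-1)\chi(\xi)$ (with $\Lambda_2>0$, exploiting that $V'$ does not change sign through the inflection) produces a commutator symbol $g$ in \eqref{gdefn} that is only \emph{degenerately} positive, comparable to $\Xi^2+(X-1)^{2m_2}$ near the critical point, and the missing quantitative ingredient is the sharp operator lower bound for the quantization of such a symbol — Lemma \ref{lemma:positivity0}/\ref{lemma:positivity1}, the anharmonic-oscillator ground-state scale $\th^{2m_2/(m_2+1)}$ — which, combined with the prefactor $(h/\th)^{(2m_2-1)/(2m_2+3)}$ in \eqref{blownupvf}, is what actually yields the exponent $(4m_2+2)/(2m_2+3)$ once $\th$ is fixed. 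Your proposal contains no substitute for this step.

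There are also genuine problems with the one-step rescaling itself. The cutoff $\phi$, supported in $\{|(x-1,\xi)|\leq\epsilon\}$, becomes after your change of variables a cutoff on a region of size $\epsilon h^{-2/(2m_2+3)}\times\epsilon h^{-(2m_2+1)/(2m_2+3)}$, and the spectral window $z-C_1^{-1}\in[-\epsilon,\epsilon]$ becomes $|w|\leq\epsilon h^{-(4m_2+2)/(2m_2+3)}$; so an $O(1)$ bound for the model operator ``microlocalized on a fixed ball, at energies near $0$'' would not imply \eqref{E:ml-inv}. Moreover, since $\alpha+\beta=1$ the rescaled symbols are critical (exotic), and after rescaling all the way to unit scale there is no small parameter left with which to control compositions, the Taylor remainder $\tilde a$, the cutoff commutators, or the higher terms of the Weyl expansion — the very errors you flag as ``the main obstacle.'' This is exactly why the paper works in the two-parameter classes $\s_{\alpha,\beta}$ with the auxiliary parameter $\th\geq h$, proves Lemma \ref{L:Q-comm-error} to bound these remainders against $\Op_h^w(g\circ\B^{-1})$, handles every real $z$ in the window simultaneously (the constant $z$ drops out of the Hamilton field, and the final step $|\la[Q_1,a^w]v,v\ra|\leq C\|Q_1v\|\,\|v\|$ holds for all real $z$), and only at the end fixes $\th$ small. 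As written, your argument both relies on an escape function that cannot exist at the semistable equilibrium and leaves the error analysis that the two-parameter calculus is designed to resolve unaddressed.
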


The proof of this estimate proceeds through
several steps.  First, we rescale the principal symbol of $P$ to
introduce a calculus of two parameters.  We then quantize in the
second parameter which eventually will be fixed as a constant in the
problem.  This technique has been used in
\cite{SjZw-mono,SjZw-frac}, \cite{Chr-NC,Chr-QMNC}, \cite{ChWu-lsm}.

\subsection{The two parameter calculus}
Before we proceed to the proof of Lemma \ref{L:ml-inv}, we shall first
review some facts about the two parameter calculus.  These ideas were
introduced in \cite{SjZw-frac}, and we shall employ the
generalizations proved in \cite{ChWu-lsm}.

We set
\begin{eqnarray*}
\lefteqn{\s_{\alpha,\beta}^{k,m, \widetilde{m}} \left(T^*(\R^n) \right):= } \\
& = & \Bigg\{ a \in \Ci \left(\R^n \times (\R^n)^* \times (0,1]^2 \right):  \\
 && \quad \quad  \left| \partial_x^\rho \partial_\xi^\gamma a(x, \xi; h, \tilde{h}) \right| 
\leq C_{\rho \gamma}h^{-m}\tilde{h}^{-\widetilde{m}} \left(
  \frac{\tilde{h}}{h} \right)^{\alpha |\rho| + \beta |\gamma|} 
\langle \xi \rangle^{k - |\gamma|} \Bigg\},
\end{eqnarray*}
when $\alpha \in [0,1]$ and $\beta\le 1-\alpha$.  Throughout, we take
$\th\ge h$.  We abbreviate $\s^{0,0,0}_{\alpha,\beta}$ by
$\s_{\alpha,\beta}$.  The focus shall be on the marginal case
$\alpha+\beta=1$.  In particular, even in this marginal case, we have
that $a\in \s^{k,m,\tilde{m}}_{\alpha,\beta}$ and $b\in
\s^{k',m',\tilde{m}'}_{\alpha,\beta}$ implies that
\[\Op^w_h (a) \circ \Op_h^w (b)=\Op_h^w(c)\]
for some symbol $c\in \s^{k+k',m+m',\tilde{m}+\tilde{m}'}_{\alpha,\beta}$.

We also have the following expansion.  This is from \cite[Lemma
3.6]{SjZw-frac} in the case that $\alpha=\beta=1/2$ and from
\cite{ChWu-lsm} in the more general case.
\begin{lemma}
\label{l:err}
Suppose that 
$ a, b \in \s_{\alpha,\beta}$, 
and that $ c^w = a^w \circ b^w $. 
Then 
\begin{equation}
\label{eq:weylc}  c ( x, \xi) = \sum_{k=0}^N \frac{1}{k!} \left( 
\frac{i h}{2} \sigma ( D_x , D_\xi; D_y , D_\eta) \right)^k a ( x , \xi) 
b( y , \eta) |_{ x = y , \xi = \eta} + e_N ( x, \xi ) \,,
\end{equation}
where for some $ M $
\begin{equation}
\label{eq:new1}
\begin{split}
& | \partial^{\gamma} e_N | \leq C_N h^{N+1}
 \\
& \ \ 
\times \sum_{ \gamma_1 + \gamma_2 = \gamma } 
 \sup_{ 
{{( x, \xi) \in T^* \R^n }
\atop{ ( y , \eta) \in T^* \R^n }}} \sup_{
|\rho | \leq M  \,, \rho \in \NN^{4n} }
\left|
\Gamma_{\alpha, \beta, \rho,\gamma }(D)
( \sigma ( D) ) ^{N+1} a ( x , \xi)  
b ( y, \eta ) 
\right| \,,
\end{split} 
\end{equation}
where $ \sigma ( D) = 
 \sigma ( D_x , D_\xi; D_y, D_\eta )  $ as usual,  
and 
\[
\Gamma_{\alpha, \beta, \rho,\gamma }(D) =( h^\alpha \pa_{(x,y)},
h^\beta \pa_{(\xi,\eta)}))^\rho \partial_{(x,\xi)}^{\gamma_1} 
\partial_{(y,\eta)}^{\gamma_2}.
\]
\end{lemma}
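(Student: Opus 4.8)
We sketch the argument, which in the isotropic case $\alpha=\beta=\tfrac12$ is \cite[Lemma~3.6]{SjZw-frac} and in the general marginal case $\alpha+\beta=1$ is \cite{ChWu-lsm}. The plan is to start from the exact representation of the Weyl composition as the action of $e^{\frac{ih}{2}\sigma(D)}$ on $a\otimes b$ restricted to the diagonal, expand the exponential by Taylor's formula with integral remainder, and then estimate the remainder by reducing it to a uniform sup-norm bound on the oscillatory propagator $e^{\frac{ish}{2}\sigma(D)}$ after an anisotropic rescaling adapted to $(\alpha,\beta)$. Write the full variable as $W=(x,\xi,y,\eta)$, so that $\sigma(D)=\sigma(D_x,D_\xi;D_y,D_\eta)$ is a constant-coefficient second order operator on $\R^{4n}$ whose symbol is a nondegenerate real quadratic form, $e^{\frac{ih}{2}\sigma(D)}$ is the associated bounded unimodular Fourier multiplier, and $c=e^{\frac{ih}{2}\sigma(D)}\bigl(a(x,\xi)b(y,\eta)\bigr)\big|_{y=x,\eta=\xi}$. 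Taylor's formula applied to $s\mapsto e^{\frac{ish}{2}\sigma(D)}$ at $s=1$ gives
\[
e^{\frac{ih}{2}\sigma(D)}=\sum_{k=0}^N\frac1{k!}\Bigl(\frac{ih}{2}\sigma(D)\Bigr)^k+\frac1{N!}\int_0^1(1-s)^N\Bigl(\frac{ih}{2}\sigma(D)\Bigr)^{N+1}e^{\frac{ish}{2}\sigma(D)}\,ds.
\]
Restricting the first sum to $\{y=x,\eta=\xi\}$ produces precisely the main part of \eqref{eq:weylc} — by the Leibniz rule each of its terms is a finite sum of products of a derivative of $a$ with a derivative of $b$, hence a well-defined symbol — and identifies
\[
e_N(x,\xi)=\frac1{N!}\Bigl(\frac i2\Bigr)^{N+1}h^{N+1}\int_0^1(1-s)^N\Bigl[\bigl(\sigma(D)\bigr)^{N+1}e^{\frac{ish}{2}\sigma(D)}\bigl(a(x,\xi)b(y,\eta)\bigr)\Bigr]\Big|_{y=x,\eta=\xi}\,ds.
\]

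The analytic core is a uniform bound on the propagator $e^{\frac{ish}{2}\sigma(D)}$. Because its symbol is a nondegenerate real quadratic form, this operator has the explicit kernel $c_n(sh)^{-2n}e^{\frac{i}{2sh}\Phi(W-W')}$, with $\Phi$ again a nondegenerate real quadratic form on $\R^{4n}$ obtained by Gaussian (Fresnel) integration of the Fourier representation. After the anisotropic dilation that rescales the $x,y$ components of $W$ by $h^{\alpha}$ and the $\xi,\eta$ components by $h^{\beta}$, the phase $\Phi/(sh)$ becomes uniformly — in $h$, in $\th$, and in $s\in(0,1]$ — nondegenerate, precisely because $\alpha+\beta=1$ forces $\sigma$, hence $\Phi$, to scale by exactly $h$. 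Repeated nonstationary-phase integrations by parts in the rescaled variables, once undone, amount to moving the weighted derivatives $h^{\alpha}\pa_{(x,y)}$ and $h^{\beta}\pa_{(\xi,\eta)}$ onto the function being propagated, and they yield, with $M$ depending only on $n$,
\[
\bigl|e^{\frac{ish}{2}\sigma(D)}g(W)\bigr|\leq C\sum_{|\rho|\leq M,\ \rho\in\NN^{4n}}\ \sup_{W'}\bigl|(h^{\alpha}\pa_{(x,y)},h^{\beta}\pa_{(\xi,\eta)})^{\rho}g(W')\bigr|
\]
uniformly in $h,\th,s$.

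To conclude, apply this estimate with $g=(\sigma(D))^{N+1}\bigl(a(x,\xi)b(y,\eta)\bigr)$, after first distributing the outer derivative $\pa^\gamma$ on $e_N$: since $\pa_{(x,\xi)}^\gamma\bigl[\,\cdot\,\big|_{y=x,\eta=\xi}\bigr]=\sum_{\gamma_1+\gamma_2=\gamma}\binom{\gamma}{\gamma_1}\bigl[\pa_{(x,\xi)}^{\gamma_1}\pa_{(y,\eta)}^{\gamma_2}\,\cdot\,\bigr]\big|_{y=x,\eta=\xi}$ and $\pa_{(x,\xi)}^{\gamma_1}\pa_{(y,\eta)}^{\gamma_2}$ commutes with both $\sigma(D)$ and $e^{\frac{ish}{2}\sigma(D)}$, one arrives at \eqref{eq:new1} with $\Gamma_{\alpha,\beta,\rho,\gamma}(D)=(h^{\alpha}\pa_{(x,y)},h^{\beta}\pa_{(\xi,\eta)})^{\rho}\pa_{(x,\xi)}^{\gamma_1}\pa_{(y,\eta)}^{\gamma_2}$; the factor $h^{N+1}$ is the one already pulled out of $e_N$, and the $s$-integral contributes only a harmless constant. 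The one step that is not bookkeeping is the propagator bound of the previous paragraph — one must verify that the $(\alpha,\beta)$-adapted dilation makes the integrations by parts uniform in $h,\th,s$ and that only finitely many ($\leq M=M(n)$) are needed — and this is exactly what is carried out in \cite[Lemma~3.6]{SjZw-frac} for $\alpha=\beta=\tfrac12$ and in \cite{ChWu-lsm} for general $\alpha+\beta=1$, which we follow.
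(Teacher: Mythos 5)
Your sketch is correct and follows exactly the argument of the sources the paper itself defers to: the paper gives no proof of this lemma, citing \cite[Lemma 3.6]{SjZw-frac} for $\alpha=\beta=1/2$ and \cite{ChWu-lsm} for general $\alpha+\beta=1$, and your Taylor-expansion-plus-rescaled-propagator-bound reconstruction is precisely that argument. No discrepancies to report.
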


With the scaling of coordinates
\begin{equation}\label{blowdown}
(x,\xi)=\B(X,\Xi)=((h/\th)^\alpha X, (h/\th)^\beta \Xi),
\end{equation}
it follows that if
$a \in \s_{\alpha,\beta}^{k,m,\tilde{m}}$ then
$a \circ \B \in \s_{0,0}^{k,m,\tilde{m}}.
$
Moreover, the unitary operator $T_{h, \tilde{h}} u(X) = \csh^{\frac{n\alpha}{2}}u\left(
  \csh^{\alpha} X \right)$,   
relates the quantizations
\begin{equation}\label{rescaledquantization}
\Op_{\tilde{h}}^w(a\circ \B) T_{h, \tilde{h}} u= T_{h, \tilde{h}} \Op_h^w(a) u.
\end{equation}

\subsection{Proof of Lemma \ref{L:ml-inv}}
Due to the cutoff $\phi^w$, we are working microlocally in
$\{|(x-1,\xi)|\leq \epsilon\}$.  We notice that it suffices to
demonstrate \eqref{E:ml-inv} for $P-z$ replaced by $Q_1=P-h^2V_1-z$ as
$V_1$ is bounded in this region and $(4m_2+2)/(2m_2+3)<2$.
 
Let
\[q_1=\xi^2 + A^{-2}-z\]
be the principal symbol of $Q_1$.  Applying Taylor's theorem about $x=1$ to
$A^{-2}$, we have
\[q_1 = \xi^2 - \frac{c_2}{C_1^2} (x-1)^{2m_2+1}(1+\tilde{a}(x))-z_1\]
where $z_1=z-C_1^{-1}\in [-\epsilon,\epsilon]$ and $\tilde{a}(x)=\O(|x-1|^{2m_2+1})$.
The Hamilton vector field $\hamvf$ associated to the symbol $q_1$ is
\[\hamvf = 2\xi\partial_x +
\Bigl((2m_2+1)\frac{c_2}{C_1^2}(x-1)^{2m_2} + \O(|x-1|^{4m_2+1})\Bigr)\partial_\xi.\]

We introduce the new variables
\[X-1 =
\frac{x-1}{(h/\th)^{\alpha}}, \quad \Xi = \frac{\xi}{(h/\th)^\beta},\]
where
\[\alpha=\frac{2}{2m_2+3},\quad \beta=\frac{2m_2+1}{2m_2+3},\]
and, as above, we shall use $\B$ to denote the map $\B(X-1,\Xi)=(x-1,\xi)$. 
In these new coordinates, we record that
\begin{multline}\label{blownupvf}
\hamvf= (h/\th)^{\frac{2m_2-1}{2m_2+3}}\Bigl(2\Xi \pa_X +
(2m_2+1)\frac{c_2}{C_1^2} 
(X-1)^{2m_2}\pa_\Xi\\+\O((h/\th)^{(2m_2+1)\alpha} |X-1|^{4m_2 +1})\pa_\Xi\Bigr).
\end{multline}

We recall the definition \eqref{lambda} of $\Lambda(r)$, and we
similarly set
\[
\Lambda_2(r)=1+\int_{-\infty}^r \la t\ra^{-1-\epsilon_0}\,dt.\]
Then, for a cutoff function $\chi(s)$ which is identity for
$|s|<\delta_1$ and vanishes for $|s|>2\delta_1$, we introduce
\[a(x,\xi;h,\th) = \Lambda(\Xi)\Lambda_2(X-1)\chi(x-1)\chi(\xi),\]
where $\delta_1>0$ is another parameter which will be fixed shortly.
As $\th\ge h$, we have that
\[
\abs{\pa_X^{\tilde{\alpha}} \pa_\Xi^{\tilde{\beta}} a}\leq C_{\tilde{\alpha},\tilde{\beta}} .
\]
We compute
\[\hamvf(a)=(h/\th)^{\frac{2m_2-1}{2m_2+3}} g(x,\xi;h,\th)+r(x,\xi;h,\th)\]
where
\begin{multline}\label{gdefn}
g = \chi(x-1)\chi(\xi)\Bigl(
2\Lambda(\Xi)\Xi
\ang{X-1}^{-1 - \ep_0} 
\\ +(2m_2+1) \frac{c_2}{C_1^2}  (X-1)^{2m_2}\ang{\Xi}^{-1-\ep_0}
\Lambda_2(X-1) (1+\O(|x-1|^{2m_2+1}))\Bigr)
\end{multline}
and $$\supp r\subset \{\abs{x-1}>\delta_1\} \cup \{\abs{\xi}>\delta_1\}.$$

We first seek to show that the following lemma from \cite{ChWu-lsm}
may be applied to $g$:
\begin{lemma}\label{lemma:positivity0}
Let a real-valued symbol $\tg(x,\xi;h)$ satisfy
$$
\tg(x,\xi;h) =  \begin{cases} c (\xi^2 + x^{2m}) (1 + r_2), & \xi^2 +x^2\leq 1\\
b(x,\xi;h), &  \xi^2 +x^2\geq 1,
\end{cases}
$$
where $c >0$ is constant, $r_2 = \O_{\s_{\alpha, \beta}}( \delta_1),$ and $b>0$ is elliptic.
Then there exists $c_0>0$ such that
$$
\lll\Op_h^w(\tg)u, u \rrr \geq c_0h^{2m/(m+1)} \| u \|_{L^2}^2
$$
for $h$ sufficiently small.
\end{lemma}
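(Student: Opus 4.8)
The plan is to extract the power $h^{2m/(m+1)}$ from the one--dimensional model operator $(hD_x)^2+x^{2m}$ and then to check that neither the passage to the elliptic regime away from $(0,0)$ nor the perturbation $r_2$ spoils the resulting lower bound. The model estimate is elementary: $H_0:=D_y^2+y^{2m}$ is self--adjoint and non--negative on $L^2(\reals)$, has compact resolvent since the potential is confining, and has trivial kernel, so its lowest eigenvalue $\mu_0$ is strictly positive; the anisotropic dilation $x=h^{1/(m+1)}y$ is unitary on $L^2(\reals)$ and conjugates $(hD_x)^2+x^{2m}$ to $h^{2m/(m+1)}H_0$, whence
\[
(hD_x)^2+x^{2m}\;\geq\;\mu_0\,h^{2m/(m+1)}.
\]
Since $\Op_h^w(\xi^2+x^{2m})=(hD_x)^2+x^{2m}$, this is the inequality to be transferred to $\Op_h^w(\tg)$. (Reducing first, if necessary, to bounded frequencies --- which is automatic in the applications, where one works microlocally on a compact set --- we may assume $\tg$ is a bounded symbol.)

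First I would localize with an IMS partition $\psi_0^2+\psi_1^2\equiv1$, where $\psi_0\in\Ci_c(T^*\reals)$ is equal to $1$ near $(0,0)$ with $\supp\psi_0\subset\{\xi^2+x^2<1\}$. Since each $\psi_j$ is real, the $\O(h)$ terms in the composition cancel and $\psi_j^w\Op_h^w(\tg)\psi_j^w=\Op_h^w(\psi_j^2\tg)+\O(h^2)$, and the IMS commutator error is also $\O(h^2)$; as $2m/(m+1)<2$, these are negligible against $h^{2m/(m+1)}$, and it suffices to bound $\lll\Op_h^w(\psi_0^2\tg)u,u\rrr$ and $\lll\Op_h^w(\psi_1^2\tg)u,u\rrr$ separately. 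On $\supp\psi_1$ one has $\tg\geq c_1>0$ --- by ellipticity of $b$ for $\xi^2+x^2\geq1$, and by positivity of $\xi^2+x^{2m}$ together with smallness of $r_2$ for $\tfrac12\leq\xi^2+x^2\leq1$ --- so $\sqrt{\tg}$ is a smooth symbol near $\supp\psi_1$ and $\psi_1^2\tg=(\psi_1\sqrt{\tg})^2$ exactly. Hence
\[
\lll\Op_h^w(\psi_1^2\tg)u,u\rrr=\norm{\Op_h^w(\psi_1\sqrt{\tg})u}^2+\O(h^2)\|u\|^2\;\geq\;-Ch^2\|u\|^2,
\]
with no loss of a full power of $h$, and a sharp G\aa rding estimate applied to $\psi_1^2(\tg-c_1)\geq0$ upgrades this to $\lll\Op_h^w(\psi_1^2\tg)u,u\rrr\geq c_1\norm{\psi_1^w u}^2-Ch\|u\|^2$.

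The well piece is where the work lies. Conjugating $\Op_h^w(\psi_0^2\tg)$ by the unitary $U$ implementing $x=h^{1/(m+1)}y$ turns $\Op_h^w$ into the non--semiclassical quantization $\Op_1^w$ and produces
\[
U^*\Op_h^w(\psi_0^2\tg)\,U\;=\;c\,h^{2m/(m+1)}\,\Op_1^w\!\bigl(\tilde{\psi}_0^2(\eta^2+y^{2m})(1+\tilde{r}_2)\bigr),
\]
where $\tilde{\psi}_0,\tilde{r}_2$ are the pull--backs of $\psi_0,r_2$ under the dilation. Here $\tilde{\psi}_0$ is slowly varying and, crucially, $\tilde{r}_2$ is still $\O(\delta_1)$: the hypothesis $r_2=\O_{\s_{\alpha,\beta}}(\delta_1)$ with $(\alpha,\beta)=(\tfrac1{m+1},\tfrac m{m+1})$ says exactly that, after this rescaling, $r_2$ becomes a genuinely \emph{relatively} small perturbation of the model symbol $\eta^2+y^{2m}$ in the attached H\"ormander calculus. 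A relative sharp G\aa rding estimate --- applied to the non--negative symbol $C\delta_1\,\tilde{\psi}_0^2(1+\eta^2+y^{2m})\pm\tilde{\psi}_0^2(\eta^2+y^{2m})\tilde{r}_2$, all of whose seminorms are $\O(\delta_1)$ --- together with the model bound $D_y^2+y^{2m}\geq\mu_0$ then yields, once $\delta_1$ is small,
\[
\Op_1^w\!\bigl(\tilde{\psi}_0^2(\eta^2+y^{2m})(1+\tilde{r}_2)\bigr)\;\geq\;\tfrac12\mu_0\,\Op_1^w(\tilde{\psi}_0^2)\;-\;(\text{negligible}).
\]
Undoing $U$ (using $U\tilde{\psi}_0^w U^*=\psi_0^w$) this gives $\lll\Op_h^w(\psi_0^2\tg)u,u\rrr\geq\tfrac c2\mu_0\,h^{2m/(m+1)}\norm{\psi_0^w u}^2-(\text{negligible against }h^{2m/(m+1)})$. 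The reason the rescaling is forced on us is that one cannot afford the \emph{absolute} $\O(h)$ loss of a naive sharp G\aa rding estimate on $\Op_h^w\bigl((\xi^2+x^{2m})r_2\bigr)$, since $h>h^{2m/(m+1)}$ as soon as $m\geq2$; the anisotropic dilation converts it into a \emph{relatively} $\O(\delta_1)$--small perturbation of the model, which is precisely what the two--parameter calculus of \cite{SjZw-frac,ChWu-lsm} is designed to accommodate. This is the main obstacle in the proof.

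Finally I would assemble the estimates, using $\norm{\psi_0^w u}^2+\norm{\psi_1^w u}^2=\|u\|^2+\O(h^2)\|u\|^2$ and a dichotomy on $\norm{\psi_1^w u}$: if $\norm{\psi_1^w u}^2\geq\tfrac12\|u\|^2$ the elliptic piece already gives $\lll\Op_h^w(\tg)u,u\rrr\gtrsim\|u\|^2$, while otherwise $\norm{\psi_0^w u}^2\gtrsim\|u\|^2$ and the well piece gives $\lll\Op_h^w(\tg)u,u\rrr\geq c_0\,h^{2m/(m+1)}\|u\|^2$, all remaining errors being absorbed for $h$ small. This yields the claimed bound.
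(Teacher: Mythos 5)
First, a factual point: the paper does not prove Lemma~\ref{lemma:positivity0} at all. It is imported verbatim from \cite{ChWu-lsm} (``we first seek to show that the following lemma from \cite{ChWu-lsm} may be applied to $g$''). So there is no in-paper proof to compare against; your proposal can only be judged on its own.

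Your outline correctly identifies the two essential ingredients: the exact rescaled model bound $(hD_x)^2 + x^{2m} \geq \mu_0\, h^{2m/(m+1)}$ with $\mu_0 = \inf\operatorname{spec}(D_y^2+y^{2m}) > 0$, and the fact that the exponents $\alpha=1/(m+1)$, $\beta=m/(m+1)$ are precisely what make the perturbation $r_2\in\s_{\alpha,\beta}$ \emph{relatively} small after the anisotropic dilation. You also correctly diagnose why a naive sharp G\aa rding loses: its absolute $\O(h)$ error dwarfs $h^{2m/(m+1)}$ once $m\geq 2$. These are the right ideas.

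The gap is in the ``relative sharp G\aa rding'' step in the well. You assert that $C\delta_1\,\tilde{\psi}_0^2(1+\eta^2+y^{2m})\pm\tilde{\psi}_0^2(\eta^2+y^{2m})\tilde{r}_2$ is a nonnegative symbol ``all of whose seminorms are $\O(\delta_1)$,'' and apply sharp G\aa rding to it. But this symbol is not uniformly $\O(\delta_1)$: on $\supp\tilde\psi_0$ (which, after dilation by $x=h^\alpha y$, $\xi=h^\beta\eta$, is the large region $h^{2\alpha}y^2+h^{2\beta}\eta^2<1$), the factor $\eta^2+y^{2m}$ grows to size $\sim h^{-2m/(m+1)}$. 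So the sup-norm (let alone higher seminorms) of this symbol is not small, and the error from a G\aa rding argument in the $\Op_1^w$ calculus is not $\O(\delta_1)$; it is controlled by unbounded Shubin-order-$(2m-2)$ quantities, not a constant. Related to this, the IMS cutoff $\tilde\psi_0$ does not commute well with the model: the Moyal composition $\tilde\psi_0\#(\eta^2+y^{2m})$ produces terms of the form $\partial_\eta\tilde\psi_0\cdot y^{2m-1}$, which on $\supp\nabla\tilde\psi_0$ are of size $h^{\beta-(2m-1)\alpha}=h^{(1-m)/(m+1)}$, i.e.\ \emph{divergent} for $m\geq 2$. So the step that reduces $\Op_1^w(\tilde\psi_0^2\cdot\text{model})$ to $\tilde\psi_0^w(D_y^2+y^{2m})\tilde\psi_0^w$ plus a negligible error, or to $\mu_0\Op_1^w(\tilde\psi_0^2)$ plus a negligible error, is not justified and is genuinely delicate.

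What is needed, and what the two-parameter framework of \cite{SjZw-frac,ChWu-lsm} is built for, is to keep a second small parameter $\tilde h$ in play (rescaling by $(h/\tilde h)^\alpha$, not by $h^\alpha$), so that compositions after rescaling still carry a semiclassical gain $\O(\tilde h)$, and to treat the perturbation $(\eta^2+y^{2m})\tilde r_2$ by a symmetric factorization such as
\[
\Op_{\tilde h}^w\bigl((\Xi^2+X^{2m})\tilde r_2\bigr) = \tilde h D_X\, R\, \tilde h D_X + X^m R X^m + \O(\tilde h)\cdot(\text{lower-order polynomial weights}),
\qquad \|R\|\leq C\delta_1,
\]
which gives $|\lll\Op_{\tilde h}^w((\Xi^2+X^{2m})\tilde r_2)u,u\rrr|\leq C\delta_1\lll((\tilde h D_X)^2+X^{2m})u,u\rrr + \O(\tilde h)$-errors, i.e.\ a perturbation that is genuinely relative to the model quadratic form rather than bounded by an absolute power of $h$. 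Absorbing the remaining lower-order polynomial weights back into $(\tilde h D_X)^2+X^{2m}$ and the compactly supported contribution of the elliptic region into $\O(\tilde h^2)$ errors are additional steps your sketch leaves out. In short: the high-level plan is right, but the step you label ``the main obstacle'' is precisely the step that is not actually carried out, and the version of it you write down does not hold as stated.

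Two minor points that do work in your favor: the observation that $\Op_h^w(\psi_1^2\tg)=\bigl((\psi_1\sqrt{\tg})^w\bigr)^2+\O(h^2)$ (so that the elliptic piece loses only $\O(h^2)$, not $\O(h)$) is exactly what makes the final dichotomy close for all $m\geq 1$ including $m=1$, where $h^{2m/(m+1)}=h$; and the derivation of $\mu_0>0$ from compactness of the resolvent of $D_y^2+y^{2m}$ is correct.
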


In the sequel, we shall only be applying the above to functions which
are microlocally cutoff to the set where $\chi(x-1)\chi(\xi)\equiv 1$.  As the errors off
this set will be $\O(h^\infty)$, we shall assume that $|x-1|\le
\delta_1$ and $|\xi|\le \delta_1$ throughout this discussion.

Over $|(X-1,\Xi)|\leq 1$, we have $\Lambda(\Xi)\sim \Xi$,
$\Lambda_2(X-1)\sim 1$, 
and $\la X-1\ra^{-1-\varepsilon_0}\sim 1$.  Thus, the term $g$, given
in \eqref{gdefn}, of $\hamvf(a)$ is bounded below by a multiple of
$\Xi^2 + (X-1)^{2m_2}$.

We next consider $|(X-1,\Xi)|\geq 1$.  Since
$\sgn\Lambda(s)=\sgn(s)$, when $|\Xi|\geq
\max(\abs{X-1}^{1 + \ep_0}, 1/{4})$, then
$$
g\geq 2\Lambda(\Xi)
\ang{X-1}^{-1-\ep_0}\Xi\gtrsim \frac{\abs{\Xi}}{\ang{\Xi}}\geq C>0.
$$
For $\abs{X-1}^{1 + \ep_0} \geq \max(\abs{\Xi},1/{4}),$ we have
$$
g \geq C' \ang{\Xi}^{-1-\ep_0} \Lambda_2(X-1) (X-1)^{2m_2} \gtrsim
\abs{X-1}^{-(1+\ep_0)^2}\abs{X-1}^{2m_2} \geq C''>0,
$$
provided $(1+\ep_0)^2<2m_2.$  In the region of interest
$|(X-1,\Xi)|\geq 1$, 
the larger of $\abs{\Xi}$ and
$\abs{X-1}^{1 + \ep_0}$ is assuredly greater than $1/{4}$ if
$\epsilon_0>0$ is sufficiently small.  Hence, we
have shown that
$$
g \geq C>0\quad \text{ in } \{\Xi^2+(X-1)^2\geq 1\}.
$$

Recapping, we have found that
$$
\hamvf(a)=(h/\th)^{\frac{2m_2-1}{2m_2+3}}g+r
$$
with 
$$r = \O_{\s_{\alpha, \beta}}((h/\th)^{(2m_2-1)/(2m_2+3)}( (h/\th)^\alpha |
\Xi| + (h/\th)^{\beta } |X-1|^{2m_2}))$$
 supported as above and
$$
g(X,\Xi;h) =  \begin{cases} c (\Xi^2 +  (X-1)^{2m_2}) (1 + r_2) , & \Xi^2 +(X-1)^2\leq 1\\
b, &  \Xi^2 +(X-1)^2\geq 1,
\end{cases}
$$
where $c >0$ is a constant, $r_2 = \O_{\s_{\alpha, \beta}}( \delta_1)$, and $b>0$ is elliptic.

By translating, using the blowdown map $\B$, and relating the
quantizations as in the previous section, we may use Lemma
\ref{lemma:positivity0} to obtain a similar bound on $g$.
\begin{lemma}\label{lemma:positivity1}
For $g$ given by \eqref{gdefn} and  $\tilde{h}>0$ sufficiently small, there exists $c>0$ such that
\[\|\Op_h^w(g\circ \B^{-1})\|_{L^2\to L^2}>c\th^{2m_2/(m_2+1)},\] 
uniformly as $h \downarrow 0$.
\end{lemma}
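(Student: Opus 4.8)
The plan is to deduce this lower bound from Lemma \ref{lemma:positivity0} by undoing the two-parameter rescaling; no work is needed beyond the positivity analysis already performed. First I would observe that, after the harmless translation $X\mapsto X-1$ which centres the parabola at the origin — this conjugates the Weyl quantization by a unitary translation and so affects no quadratic-form bound — the real-valued symbol $g$ of \eqref{gdefn}, read in the blown-down variables $(X,\Xi)$, is exactly of the model form required by Lemma \ref{lemma:positivity0} with $m=m_2$: this is the content of the ``Recapping'' paragraph above, namely that on the unit disk $g$ equals $c\,(\Xi^2+(X-1)^{2m_2})(1+r_2)$ with $c>0$ a constant and $r_2=\O_{\s_{\alpha,\beta}}(\delta_1)$, while on the complement $g$ is bounded below by a positive constant (on the set $\{\chi(x-1)\chi(\xi)\equiv 1\}$ to which we have reduced, modulo $\O(h^\infty)$). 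Fixing $\delta_1>0$ small enough, Lemma \ref{lemma:positivity0} applied with semiclassical parameter $\th$ furnishes $c_0>0$ with
\[
\la \Op_{\th}^w(g)\,v,v\ra \;\geq\; c_0\,\th^{\,2m_2/(m_2+1)}\,\|v\|_{L^2}^2
\]
for all $v\in L^2$ and $\th$ sufficiently small; here $c_0$ is independent of $h$, since on the region where the cutoffs are identically one the $h$-dependence of $g$ enters only through error terms of size $\O(\delta_1)$ absorbed into $r_2$.

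Next I would transport this bound to the original $h$-scaling. Setting $a=g\circ\B^{-1}$, the symbol $g$ re-expressed in the $(x,\xi)$ variables, so that $a\circ\B=g$, the intertwining relation \eqref{rescaledquantization} reads $\Op_{\th}^w(g)\,T_{h,\th}=T_{h,\th}\,\Op_h^w(g\circ\B^{-1})$ with $T_{h,\th}$ unitary on $L^2$. Hence, for $u\in L^2$ and $v=T_{h,\th}u$ (so $\|v\|_{L^2}=\|u\|_{L^2}$),
\[
\la \Op_h^w(g\circ\B^{-1})\,u,u\ra \;=\; \la \Op_{\th}^w(g)\,v,v\ra \;\geq\; c_0\,\th^{\,2m_2/(m_2+1)}\,\|u\|_{L^2}^2 .
\]
By the Cauchy--Schwarz inequality, $\|\Op_h^w(g\circ\B^{-1})\,u\|_{L^2}\geq c_0\,\th^{\,2m_2/(m_2+1)}\,\|u\|_{L^2}$ for every $u$, and taking the supremum over unit vectors gives $\|\Op_h^w(g\circ\B^{-1})\|_{L^2\to L^2}\geq c_0\,\th^{\,2m_2/(m_2+1)}$, which is the assertion with $c=c_0$.

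The genuine difficulty lies not in this lemma but in the material preceding it: checking that $g$ dominates a multiple of $\Xi^2+(X-1)^{2m_2}$ on the unit disk and is positive off it, and — the essential point — that these features are uniform in both $h$ and $\th$. That uniformity is why the errors were controlled in the two-parameter calculus $\s_{\alpha,\beta}$ rather than merely pointwise, so that one choice of $\delta_1$ suffices to make the smallness hypothesis $r_2=\O(\delta_1)$ of Lemma \ref{lemma:positivity0} hold. Once that is granted, everything above is purely formal, being only the unitary equivalence \eqref{rescaledquantization} between the $\th$-quantization of $g$ in $(X,\Xi)$ and the $h$-quantization of $g\circ\B^{-1}$ in $(x,\xi)$.
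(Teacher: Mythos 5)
Your argument is correct and is precisely the route the paper intends: translate so the inflection point sits at the origin, invoke Lemma \ref{lemma:positivity0} with semiclassical parameter $\th$ and $m=m_2$ (using the ``recapping'' form of $g$ with $r_2=\O_{\s_{\alpha,\beta}}(\delta_1)$ uniform in $h$), and transfer the quadratic-form bound to $\Op_h^w(g\circ\B^{-1})$ via the unitary intertwining \eqref{rescaledquantization}. The paper omits the details, citing \cite{ChWu-lsm}, but your write-up matches that omitted proof in both structure and the points where uniformity in $h$ must be checked.
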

The proof of this lemma follows exactly as that in \cite{ChWu-lsm} and
is, thus, omitted.


Before completing the proof of Lemma \ref{L:ml-inv}, we need the
following lemma about the lower order terms in the expansion of the
commutator of $Q_1$ and $a^w$.  

\begin{lemma}
\label{L:Q-comm-error}
The symbol expansion of $[Q_1, a^w]$ in the $h$-Weyl calculus is of
the form
\begin{align*}
[Q_1, a^w] = & \Op_h^w \Bigg( \Big( 
\frac{i h}{2} \sigma ( D_x , D_\xi; D_y , D_\eta) \Big) (q_1(x, \xi)
a(y, \eta) - q_1(y, \eta) a ( x , \xi) ) \Bigl|_{ x = y , \xi = \eta} \\
& + e (
x, \xi ) + r_3(x, \xi)\Bigg) ,
\end{align*}
where $r_3$ is supported in $\{|(x,\xi)\geq \delta_1\}$ and $e$ satisfies
\begin{multline*}
\|\Op_h^w(e)\|_{L^2\to L^2} \\\leq C \tilde{h}^{\frac{2m_2 + 7}{2m_2 + 3}-\frac{2m_2}{m_2+1}}
h^{\frac{4m_2 + 2}{2m_2+3}} \Bigl(\|\Op_h^w(g\circ \B^{-1})\|_{L^2\to L^2} + \O(\th^{2+\frac{2m_2}{m_2+1}})\Bigr),
\end{multline*}
with $g$ given by \eqref{gdefn}.
\end{lemma}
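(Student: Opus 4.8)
We shall expand $[Q_1,a^w]$ by means of the two--parameter Weyl calculus of the previous section (Lemma~\ref{l:err} and its $\s^{k,m,\tilde m}_{\alpha,\beta}$ analogue from \cite{ChWu-lsm}), exploiting that $Q_1=(hD_x)^2+A^{-2}(x)-z$ --- the subprincipal $h^2 V_1$ term having been subtracted off --- has the \emph{exact} $h$--Weyl symbol $q_1=\xi^2+A^{-2}(x)-z$. Applying \eqref{eq:weylc} to $q_1^w a^w$ and to $a^w q_1^w$ and subtracting: the $k=0$ terms cancel, the $k=1$ term is the displayed leading symbol, and --- because $\big(\sigma(D_x,D_\xi;D_y,D_\eta)\big)^{2j}$ is invariant under the swap $(x,\xi)\leftrightarrow(y,\eta)$ --- the even--order terms cancel on the diagonal while the odd orders double. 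The terms obtained by differentiating the cutoffs $\chi(x-1),\chi(\xi)$ in $a$ are supported in $\{|x-1|>\delta_1\}\cup\{|\xi|>\delta_1\}$ and are gathered into $r_3$; thus $e$ is the sum of the odd--order terms $k\ge3$ together with the Lemma~\ref{l:err} remainder $e_N$.

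The essential simplification is that $q_1$ has degree $2$ in $\xi$: in $\big(\sigma(D)\big)^k q_1(x,\xi)a(y,\eta)$ with $k\ge3$, any monomial carrying a $\xi$--derivative on $q_1$ must carry an $x$--derivative on $q_1$ as well --- $\xi^2$ being $x$--independent --- and hence vanishes, leaving only $\p_x^k q_1=r^{(k)}(x)$, where
\[
r(x):=A^{-2}(x)-C_1^{-1}=-\tfrac{c_2}{C_1^2}(x-1)^{2m_2+1}\bigl(1+\ta(x)\bigr)
\]
vanishes to order $2m_2+1$ at $x=1$. Consequently, for each odd $k\ge3$, the order--$k$ term of $e$ is a fixed constant times $h^k\,r^{(k)}(x)\,\p_\xi^k a(x,\xi)$. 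We take $N=\max(4,2m_2)$, so that $e$ consists of the $k=3$ term, the odd terms $5\le k\le N$, and a remainder $e_N$ that is a factor $\th^2$ smaller than the $k=3$ term.

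To estimate $h^k r^{(k)}\p_\xi^k a$ we pass to the blown--up variables \eqref{blowdown} with $\alpha=\tfrac2{2m_2+3}$, $\beta=\tfrac{2m_2+1}{2m_2+3}$, so that $a\circ\B\in\s_{0,0}$, and use $\|\Op_h^w(\cd)\|=\|\Op_\th^w(\cd\circ\B)\|$ from \eqref{rescaledquantization}. Each $\xi$--derivative on $a$ contributes a factor $\hsc^{\beta}$ with $\Lambda^{(k)}$ bounded, while $r^{(k)}(x)$ becomes, after blowdown, a \emph{bounded} symbol in $\s_{0,0}$, its vanishing to order $(2m_2+1-k)_+$ at $x=1$ exactly compensating the growth of the rescaled variable on the support of the cutoffs. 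Writing $\kappa=\tfrac{4m_2+2}{2m_2+3}$ and using $\alpha+\beta=1$, we obtain
\[
\big\|\Op_h^w\big(h^k r^{(k)}\p_\xi^k a\big)\big\|\lesssim h^k\hsc^{\beta k}\le h^{\kappa}\,\th^{k-\kappa}\,\hsc^{(2m_2+1-k)_+\alpha}.
\]
The blown--down symbol here is a lower--order analogue of the second term of $g$ in \eqref{gdefn} (with the vanishing order lowered from $2m_2$ to $2m_2+1-k$); from the structure of $g$ one reads off the lower bound $\|\Op_h^w(g\circ\B^{-1})\|\gtrsim\hsc^{2m_2\alpha}$, which, combined with Lemma~\ref{lemma:positivity1}, dominates $\th^{\,2m_2/(m_2+1)}\hsc^{(2m_2+1-k)_+\alpha}$ for every $k\ge3$. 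Absorbing, and using $k-\kappa=\tfrac{2m_2+7}{2m_2+3}+(k-3)$, we arrive at
\[
\big\|\Op_h^w\big(h^k r^{(k)}\p_\xi^k a\big)\big\|\lesssim \th^{\,\frac{2m_2+7}{2m_2+3}-\frac{2m_2}{m_2+1}+(k-3)}\,h^{\kappa}\,\|\Op_h^w(g\circ\B^{-1})\|;
\]
the $k=3$ term is the principal contribution, while the $k\ge5$ terms and the remainder $e_N$, gaining the factor $\th^{k-3}\le\th^{2}$, together furnish the $\O\bigl(\th^{\,2+2m_2/(m_2+1)}\bigr)$ correction. Summing yields the assertion.

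The main obstacle lies in the bookkeeping of the last paragraph: one must verify that the leading correction carries \emph{exactly} the power $h^{(4m_2+2)/(2m_2+3)}$ of $h$ --- which is precisely what makes it absorbable into the positive commutator term $h\,\csh^{(2m_2-1)/(2m_2+3)}\Op_h^w(g\circ\B^{-1})$ when Lemma~\ref{L:ml-inv} is completed --- that $N$ is chosen large enough relative to $m_2$ for all the remaining contributions to gain a genuine factor $\th^{2}$ (the apparent growth of $r^{(k)}$ for $k>2m_2+1$ causing no difficulty because of the compact support of the cutoffs), and, most delicately, that the $\th$--quantization of the rescaled correction symbols is controlled by $\|\Op_h^w(g\circ\B^{-1})\|$ at the cost of only the factor $\th^{-2m_2/(m_2+1)}$ --- which forces one to pin down the size of $\|\Op_h^w(g\circ\B^{-1})\|$ somewhat more precisely than the positivity estimate of Lemma~\ref{lemma:positivity1} provides.
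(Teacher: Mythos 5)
Your proposal follows the same overall architecture as the paper --- the Weyl composition expansion, the observation that only odd terms survive in a commutator, the key algebraic simplification that for order $k\geq 3$ the mixed derivatives of $q_1$ vanish so only $\partial_x^k q_1\,\partial_\xi^k a$ contributes, and the rescaling which produces the $h^\kappa\th^{k-\kappa}$ prefactor --- and all of those steps are correct. The problem is the final conversion of the rescaled operator norm into the stated bound involving $\|\Op_h^w(g\circ\B^{-1})\|$.

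You bound the blown-down error term uniformly over the cutoff support, picking up the growth factor $\hsc^{(2m_2+1-k)_+\alpha}$ coming from $|X-1|\lesssim\hsc^\alpha$, and then dispose of this factor by appealing to a lower bound $\|\Op_h^w(g\circ\B^{-1})\|\gtrsim\hsc^{2m_2\alpha}$. That lower bound is nowhere established: Lemma~\ref{lemma:positivity1} gives only $\gtrsim\th^{2m_2/(m_2+1)}$, and one cannot simply ``read off'' the stronger estimate from the shape of $g$, since $g$ is not slowly varying on its full (rescaled) support and the quantization of a symbol with a high, narrow peak need not achieve the sup. You acknowledge this yourself in your final paragraph; as it stands, it is a genuine hole, and one that is avoided in the paper by a different decomposition: the paper estimates separately on $\{|X-1|\leq 1\}$, where the rescaled symbol is uniformly bounded and Lemma~\ref{lemma:positivity1} alone converts the bound into $\th^{-2m_2/(m_2+1)}\|\Op_h^w(g\circ\B^{-1})\|$, and on $\{|X-1|\geq 1\}$, where the rescaled symbol is pointwise dominated by $g$ (exploiting precisely that the vanishing order drops from $2m_2$ to $2m_2-2k$ while the $\Lambda$-derivatives decay faster than $\langle\Xi\rangle^{-1-\epsilon_0}$), giving $\|\Op_\th^w(\cdot)\|\leq\|\Op_h^w(g\circ\B^{-1})\|+\O(\th^2)$ directly. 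Your global $L^\infty$ bound sacrifices exactly this information.

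A smaller but real issue: you attribute the $\O(\th^{2+2m_2/(m_2+1)})$ correction in the statement to the $k\geq 5$ terms and the remainder. That is not where it comes from. Those higher-order terms are still proportional to $\|\Op_h^w(g\circ\B^{-1})\|$ with a smaller $\th$-power and are absorbed into the same bracket as $k=3$; the $\O(\th^{2+2m_2/(m_2+1)})$ arises from the $\O(\th^2)$ error in the outer-region comparison $\|\Op_\th^w(\cdot)\|\leq\|\Op_\th^w(g)\|+\O(\th^2)$, after factoring out the $\th^{-2m_2/(m_2+1)}$ from the inner-region estimate. Because your scheme never produces that $\O(\th^2)$ term, this bookkeeping cannot be made to work as written even if the missing lower bound on $\|\Op_h^w(g\circ\B^{-1})\|$ were supplied.
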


\begin{proof}  Since everything is in the Weyl calculus, only the odd
  terms in the exponential composition expansion are non-zero.  In
  accordance with Lemma \ref{l:err},
we set 
\begin{align*}
e& (x,\xi) \\& = \chi(x-1)\chi(\xi) \\
& \qquad \times \sum_{k=1}^{m_2-1}
  \frac{2}{(2k+1)!}\Bigl(\frac{ih}{2}\sigma(D)\Bigr)^{2k+1}
  q_1(x,\xi)\Lambda((\th/h)^\beta \eta)\Lambda_2((\th/h)^\alpha
  (y-1))\Bigl|_{\substack{x=y\\ \xi=\eta}}\\ & \qquad + \chi(\xi)\chi(x-1)e_{2m_2}(x,\xi).
\end{align*}
Here we have extracted the terms in the expansion where derivatives
fall on the cutoff $\chi(\eta)$ of $a$ as these terms have supports
compatible with $r_3$.  For convenience, however, $e_{2m_2}$ denotes
the full error in the expansion of $[Q_1,a^w]$.

Recalling that $q_1(x,\xi)=\xi^2-(x-1)^{2m_2+1}(1+\tilde{a}(x))$, it
follows that
\begin{align*}\tilde{e}_k&:=h^{2k+1}\chi(x-1)\chi(\xi)
\sigma(D)^{2k+1}q_1(x,\xi) \Lambda((\th/h)^\beta
\eta)\Lambda_2((\th/h)^\alpha (y-1)) \Bigl|_{\substack{x=y\\ \xi=\eta}}\\ &= h^{2k+1}\chi(x-1)\chi(\xi) D_x^{2k+1}q_1(x,\xi) D_\eta^{2k+1}
\Lambda((\th/h)^\beta \eta)\Lambda_2((\th/h)^\alpha (y-1)) \Bigl|_{\substack{x=y\\ \xi=\eta}}\\
&=ch^{2k+1}(x-1)^{2m_2+1-(2k+1)}(1+\O((x-1)^{2m_2+1}))\\
& \qquad \qquad \times (\th/h)^{(2k+1)\beta}\Lambda^{(2k+1)}((\th/h)^\beta\xi)\\&\qquad\qquad\times
\Lambda_2((\th/h)^\alpha (x-1))\chi(x-1)\chi(\xi)
\end{align*}
for $1\le k\le m_2-1$.

In order to estimate $e$, we first estimate each $\tilde{e}_k$, $1\le
k\le m_2-1$, using
conjugation to the $2$-parameter calculus.  We have
\[\|\Op^w_h (\tilde{e}_k) u\|_{L^2} = \|T_{h,\th}  \Op^w_h(\tilde{e}_k)
T^{-1}_{h,\th} T_{h,\th} u\|_{L^2}\leq \|T_{h,\th} \Op^w_h(\tilde{e}_k)
T_{h,\th}\|_{L^2\to L^2} \|u\|_{L^2}\]
since $T_{h,\th}$ is unitary.  We recall that
$T_{h,\th}\Op^w_h(\tilde{e}_k)T^{-1}_{h,\th} =
\Op_{\th}^w(\tilde{e}_k\circ \B)$ and note that
\begin{multline*}
\tilde{e}_k\circ\B =c h^{2k+1} (h/\th)^{(2m_2+1-(2k+1))\alpha-(2k+1)\beta}
(X-1)^{2m_2+1-(2k+1)} \\\times  (1+\O((x-1)^{2m_2+1}))
\Lambda^{(2k+1)}(\Xi)
\Lambda_2(X-1)\chi(x-1)\chi(\xi),
\end{multline*}
which can be estimated by
\[C h^{\frac{4m_2+2}{2m_2+3}}
\th^{\frac{2m_2+7}{2m_2+3}}\th^{2(k-1)}(X-1)^{(2m_2+1)-(2k+1)} \Lambda^{(2k+1)}(\Xi)\chi(x-1)\chi(\xi).\]

On $|X-1|\leq 1$, we have that
\[k=(X-1)^{(2m_2+1)-(2k+1)} \Lambda^{(2k+1)}(\Xi)\chi(x-1)\chi(\xi)\]
is bounded, and thus,
\[\|\Op_{\th}^w (k)\|_{L^2\to L^2} \le C \th^{-2m_2/(m_2+1)}
\|\Op_h^w(g\circ \B^{-1})\|_{L^2\to L^2}\]
by Lemma \ref{lemma:positivity1}.  While on $|X-1|\geq 1$, we have
$k\le g$, and thus
\[\|\Op_{\th}^w (k)\|_{L^2\to L^2} \le \|\Op^w_\th (g)\|_{L^2\to L^2}
+ O(\th^2) \le \|\Op^w_h (g\circ \B^{-1})\|_{L^2\to L^2} + O(\th^2).\]

For $e_{2m_2}$, by the standard $L^2$ continuity theorem of
$h$-pseudodifferential operators, it suffices to estimate a finite
number of derivatives of the error $e_{2m_2}$.
We note the bound of Lemma \ref{l:err}
\[
|\partial^\gamma e_{2m_2}|\leq C h^{2m_2+1}
\sum_{\gamma_1+\gamma_2=\gamma} \sup_{\substack{(x,\xi)\in
    T^*\R^n\\(y,\eta)\in T^*\R^n \\ \rho\in \NN^{4n}, |\rho|\le M }}
|\Gamma_{\alpha,\beta,\rho,\gamma}(D)
(\sigma(D))^{2m_2+1} q_1(x,\xi)a(y,\eta)|.
\]
We have
\begin{multline*}(\sigma(D))^{2m_2+1}q_1(x,\xi)a(y,\eta) \\= c (1+\O(x-1)^{2m_2+1})
\chi(y-1)\Lambda((\th/h)^\alpha (y-1))
D^{2m_2+1}_\eta[\Lambda((\th/h)^\beta\eta) \chi(\eta)].\end{multline*}
The last factor is $\O((\th/h)^{(2m_2+1)\beta})$.  Moreover, the
derivatives $h^\beta\partial_\eta$ and $h^\alpha\partial_y$ preserve
the order of $h$ and increase the order of $\th$, while the other
derivatives lead to higher powers of $h/\th\ll 1$.  It, thus, follows
that $|\partial^\gamma (\chi(x-1)\chi(\xi)e_{2m_2})|$ is
\[\O(h^{(4m_2+2)/(2m_2+3)} \th^{(2m_2+1)^2/(2m_2+3)}),\]
and thus, when also combined with Lemma \ref{l:err} satisfies the
given bound.
\end{proof}

We now complete the proof of Lemma \ref{L:ml-inv}.  We set
$v=\varphi^w u$ where $\varphi$ has support where
$\chi(x)\chi(\xi)=1$, and in particular, away from the support of $r_3$.

Then
Lemmas \ref{lemma:positivity1} and \ref{L:Q-comm-error} yield
\begin{align*}
i\ang{[Q_1,a^w]v,v}&=h\ang{\Op_h^w(\hamvf(a))v,v}+\ang{\Op_h^w(e)u,u}
+ \O(h^\infty)\|v\|_{L^2}^2
\\
&= h (h/\th)^{\frac{2m_2-1}{2m_2+3}} \ang{\Op_h^w(g\circ
  \B^{-1})v,v}+\ang{\Op_h^w(e)u,u} + \O(h^\infty)\|v\|_{L^2}^2
\\
&= h^{\frac{4m_2+2}{2m_2+3}}\big( \th^{-\frac{2m_2-1}{2m_2+3}} +\O(\th^{\frac{2m_2 + 7}{2m_2 +
  3}-\frac{2m_2}{m_2+1}})\big)\ang{\Op_h^w(g\circ
\B^{-1})v,v}\\&\qquad\qquad
\qquad\qquad\qquad+ (\O(h^\infty)+\O(\th^{2+\frac{2m_2}{m_2+1}}))\|v\|_{L^2}^2
\\
&\geq C
h^{\frac{4m_2+2}{2m_2+3}} \th^{1 + \frac{4}{2m_2+3} - \frac{2}{m_2 + 1}} \norm{v}_{L^2}^2,
\end{align*}
 for
$\th$ sufficiently small.
The Schwarz inequality and the $L^2$ continuity theorem for
$h$-pseudodifferential operators guarantees 
$$
\big\lvert \ang{[Q_1,a^w]v,v}\big\rvert \leq C \|Q_1 v\|_{L^2} \|v\|_{L^2},
$$
and thus the desired bound with $1\gg \th>0$ fixed. \qed

\section{Quasimodes}

We end by constructing quasimodes near $(1,0)$ in phase space and use
these to saturate the estimate of Proposition \ref{P:smoothing}, and
hence that of Theorem \ref{T:smoothing}.  The proofs follow from
straightforward modifications of those in \cite{ChWu-lsm}.  We, thus,
only provide a terse description.  

Quasimodes were already constructed
near $(0,0)$ in \cite{ChWu-lsm}.  We focus only on the construction
near the inflection point.  We let
\[
\tP = -h^2 \partial_x^2 - c_2 (x-1)^{2m_2 +1}
\]
near $x=1$ and construct quasimodes that are localized to a small
neighborhood of $x=1$.

We set
\[
\gamma = \frac{4m+2}{2m+3},
\]
$E = (\alpha + i \beta)h^{\gamma}$ where $\alpha, \beta>0$ and are
independent of $h$, and 
\[
\varpi(x) = \int_1^{x} (E + c_2 (y-1)^{2m_2+1})^{1/2} dy,
\]
where the branch of the square root is chosen to have positive
imaginary part.  Letting 
\[
u(x) = (\varpi')^{-1/2} e^{i \varpi  / h},
\]
we see that
\[
(hD)^2 u = (\varpi')^2 u + f u,
\]
where
\[
f  = -h^2 \left( \frac{3}{4} (\varpi')^{-2} (\varpi'')^2 - \frac{1}{2}
  (\varpi')^{-1} \varpi ''' \right).
\]

Straightforward modifications of the proof contained in \cite[Lemma
3.1]{ChWu-lsm} yield the following:
\begin{lemma}
The phase function $\varpi$ satisfies the following properties:
\begin{description}

\item[(i)]  There exists $C>0$ independent of $h$ such that 
\[
| \Im \varpi | \leq C
 h.
\]
In particular, if $| x -1| \leq C h^{\gamma/(2m_2+1)}$, $| \Im \varpi| \leq C'$
for some $C'>0$ independent of $h$.

\item[(ii)]  There exists $C>0$ independent of $h$ such that if
  $\delta>0$ is sufficiently small and $| x -1| \leq \delta
  h^{\gamma/(2m_2+1)}$, then 
\[
C^{-1} h^{\gamma/2} \leq | \varpi'(x) | \leq C 
  h^{\gamma/2}.
\]

\item[(iii)]
\begin{align*}\varpi' &= (E + c_2 (x-1)^{2m_2+1})^{1/2}, \\
\varpi'' &= \frac{1}{2} c_2 (2m_2+1)(x-1)^{2m_2} (\varpi')^{-1}, \\
\varpi''' &= \Bigl( \frac{1}{2}c_2(2m_2+1) (2m_2)(E (x-1)^{2m_2-1} +
  c_2(x-1)^{4m_2})\\&\qquad\qquad\qquad\qquad\qquad\qquad - \frac{1}{4} c_2^2 (2m_2+1)^2 (x-1)^{4m_2}
\Bigr) ( \varpi')^{-3}.
\end{align*}
In particular, there are constants $C_{m_2,1}, C_{m_2,2}$ such that 
\[
f = -h^2 \left( C_{m_2,1} (x-1)^{4m_2} +  C_{m_2,2}E  (x-1)^{2m_2-1}\right) (\varpi'
)^{-4}.
\]

\end{description}

\end{lemma}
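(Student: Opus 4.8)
The plan is to run the two-scale rescaling argument of \cite[Lemma 3.1]{ChWu-lsm}, with $x^{2m}$ there replaced by $c_2(x-1)^{2m_2+1}$. The one structural fact that makes everything go through is that $\Im\bigl(E+c_2(x-1)^{2m_2+1}\bigr)=\beta h^{\gamma}>0$ for \emph{every} real $x$ (here $\alpha,\beta>0$ are the constants in $E=(\alpha+i\beta)h^{\gamma}$, and $c_2>0$), so the argument of the square root never meets the branch cut and $\varpi'$ is a smooth, nowhere-vanishing function of $x$ with $\Im\varpi'>0$. With this in hand, part (iii) is purely mechanical: $\varpi'=(E+c_2(x-1)^{2m_2+1})^{1/2}$ by the fundamental theorem of calculus, the formulas for $\varpi''$ and $\varpi'''$ follow from the chain and product rules, and substituting them into $f=-h^2\left(\frac{3}{4}(\varpi')^{-2}(\varpi'')^2-\frac{1}{2}(\varpi')^{-1}\varpi'''\right)$ and collecting the $(x-1)^{4m_2}$ and $E(x-1)^{2m_2-1}$ terms yields the stated formula for $f$ with explicit constants $C_{m_2,1},C_{m_2,2}$.

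For part (ii), the point is that the two terms $E$ and $c_2(x-1)^{2m_2+1}$ balance on the scale $|x-1|\sim h^{\gamma/(2m_2+1)}$, and this exponent is exactly the $\alpha=\frac{2}{2m_2+3}$ of the two-parameter calculus, while $\gamma/2=\beta=\frac{2m_2+1}{2m_2+3}$; in particular $\frac{\gamma}{2m_2+1}+\frac{\gamma}{2}=1$. Substituting $x-1=h^{\gamma/(2m_2+1)}X$ gives $\varpi'(x)=h^{\gamma/2}\bigl(\alpha+i\beta+c_2X^{2m_2+1}\bigr)^{1/2}$, and for $X$ ranging over any fixed bounded set the argument of the square root lies in a compact subset of the open upper half-plane bounded away from $0$ (its imaginary part being identically $\beta$). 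Hence the chosen branch is smooth there and $|\,\cdot\,|^{1/2}$ is pinched between two positive constants, which is exactly the estimate in (ii); the same computation shows $\Re\varpi',\Im\varpi'=\O(h^{\gamma/2})$ with $\Im\varpi'>0$ in this region.

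Part (i) is the substantive step. I would write $\varpi(x)=\int_1^x\varpi'(y)\,dy$ and split at the matching scale. On the inner range $|y-1|\lesssim h^{\gamma/(2m_2+1)}$, part (ii) gives $|\Im\varpi'|\le|\varpi'|\lesssim h^{\gamma/2}$, so its contribution to $|\Im\varpi|$ is $\lesssim h^{\gamma/(2m_2+1)+\gamma/2}=h$. On the outer range $|c_2(y-1)^{2m_2+1}|\gg|E|\sim h^{\gamma}$, so $E+c_2(y-1)^{2m_2+1}=c_2(y-1)^{2m_2+1}\Bigl(1+\O\!\bigl(h^{\gamma}|y-1|^{-(2m_2+1)}\bigr)\Bigr)$; expanding the square root about its leading term, with the argument lying near the positive real axis for $y>1$ and near the negative real axis (the classically forbidden side) for $y<1$, yields $|\Im\varpi'(y)|\lesssim h^{\gamma}|y-1|^{-(2m_2+1)/2}$ for $y>1$ and $\Im\varpi'(y)\sim|y-1|^{(2m_2+1)/2}$ for $y<1$; the latter integrates to $\Im\varpi\sim|y-1|^{(2m_2+3)/2}$ and is responsible for the exponential decay $|u|=e^{-\Im\varpi/h}$ that localizes the quasimode to the left of $x=1$. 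Integrating the $y>1$ bound over $y-1\in\bigl[c\,h^{\gamma/(2m_2+1)},\O(1)\bigr]$, and using that $(2m_2+1)/2\ge\frac{3}{2}>1$ so that $\int(y-1)^{-(2m_2+1)/2}\,dy$ is dominated by its lower endpoint, the powers of $h$ combine to exactly $h$ (the exponent simplifying by the definition of $\gamma$). Hence $|\Im\varpi|\le Ch$ in the regime relevant to the quasimode (to the right of, and within $\O(h^{\gamma/(2m_2+1)})$ of, $x=1$), and the ``in particular'' clause is then immediate. I note in passing that, in contrast to the non-degenerate maximum, no logarithm appears here for any $m_2\ge1$, consistent with the clean power in Lemma \ref{L:ml-inv}.

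The only genuine obstacle I anticipate is the outer region in (i): one must expand the branch-fixed square root \emph{uniformly} all the way down to the matching scale $|y-1|\sim h^{\gamma/(2m_2+1)}$ and verify that the exponent bookkeeping produces precisely the power $h$; keeping the branch and sign conventions straight across $y=1$ is the secondary care-point. Past that, everything is a direct transcription of the argument for the non-degenerate maximum in \cite[Lemma 3.1]{ChWu-lsm}.
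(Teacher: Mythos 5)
Your proof follows exactly the route the paper itself intends (the paper gives no argument, deferring to a ``straightforward modification'' of \cite[Lemma 3.1]{ChWu-lsm}): the computations in (iii), the rescaling $x-1=h^{\gamma/(2m_2+1)}X$ for (ii), and the splitting of (i) at the matching scale with the exponent identity $\gamma\bigl(\tfrac12+\tfrac1{2m_2+1}\bigr)=1$ all check out. One sign slip in your side remark: with the branch chosen so that $\Im \varpi'>0$ everywhere, for $x<1$ one has $\Im\varpi(x)=\int_1^x\Im\varpi'\,dy<0$, so $|u|=|\varpi'|^{-1/2}e^{-\Im\varpi/h}$ \emph{grows} into the classically forbidden side rather than decaying; the localization of the quasimode is achieved by the cutoff $\chi((x-1)/\mu)$ at the inner scale, not by exponential decay. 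This does not affect (i)--(iii) on $\supp\tu$, where only the inner-region estimate is needed, and you correctly observe that the bound $|\Im\varpi|\le Ch$ must be read as restricted to the region where the quasimode lives (on the right it in fact persists out to $|x-1|=\O(1)$, on the left it does not).
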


From this, we obtain that $|u(x)|\sim |\varphi'|^{-1/2}$ for all $x$.
We localize $u$ by setting 
\[\mu=\delta h^{\gamma/(2m_2+1)},\quad 0<\delta\ll 1\]
fixing $\chi(s)\in \C^\infty_c(\R)$ so that $\chi\equiv 1$ for
$|s|\leq 1$ and $\supp\chi\subset [-2,2]$, and letting
\[
\tu(x) = \chi((x-1)/\mu) u(x).
\]
More calculations, which are again in the spirit of those contained in
\cite{ChWu-lsm}, show that $\|\tu\|^2_{L^2}\sim
h^{(1-2m_2)/(2m_2+3)}$ and
\[(hD)^2 \tu = (\varpi')^2 \tu + R,
\]
where 
\[
R = f \tu + [(hD)^2, \chi((x-1)/\mu)] u.
\]
Moreover, the remainder satisfies
\begin{equation}
\label{E:R-remainder}
\| R \|_{L^2} = \O (h^{\gamma}) \| \tu \|_{L^2}.
\end{equation}




This quasimode can then be used to saturate the local smoothing
estimates near the inflection point.  We, again, refer the interested
reader to the proof in \cite[Theorem 3]{ChWu-lsm}, which provides the
following.

\begin{theorem}
\label{T:sharp}

Let $\phi_0(x, \theta) = e^{ik \theta} \tu(x)$, where $\tu \in \Ci_c (
\reals)$ was constructed above.  We let $h=\abs{k}^{-1},$ where $|k|$ is
taken sufficiently large.  Suppose $\psi$
solves
\[
\begin{cases}
(D_t - \tDelta) \psi = 0, \\
\psi|_{t=0} = \phi_0.
\end{cases}
\]
Then for any $\chi
\in \Ci_c( \reals)$ such that $\chi \equiv 1$ on $\supp \tu$ and
$A>0$ sufficiently large, independent of $k$, there exists a constant
$C_0>0$ independent of $k$ such that 
\begin{equation}\label{saturated}
\int_0^{|k|^{-4/(2m_2+3)}/A} \| \lll D_\theta \rrr \chi \psi \|_{L^2}^2
dt \geq C_0^{-1} \| \lll D_\theta \rrr^{(2m_2+1)/(2m_2+3)} \phi_0 \|_{L^2}^2.
\end{equation}

\end{theorem}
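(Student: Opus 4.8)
The plan is to propagate the quasimode $\tu$ constructed above under the Schr\"odinger flow and show that, on the short time scale $t \lesssim |k|^{-4/(2m_2+3)}$, the $\theta$-mode $\psi_k(t,x)$ of the solution remains essentially concentrated near $x=1$ with size comparable to the initial data. First I would reduce to the one-dimensional problem: since $\phi_0 = e^{ik\theta}\tu(x)$ is a pure Fourier mode, $\psi(t,x,\theta) = e^{ik\theta} e^{-itP_k}\tu(x)$ where $P_k = (hD_x)^2 k^2 + \cdots$ acts only in $x$, and $\lll D_\theta\rrr$ acting on $\psi$ just multiplies by $\lll k\rrr$. So the left side of \eqref{saturated} is $\lll k\rrr^2 \int_0^{T_0} \|\chi e^{-itP_k}\tu\|_{L^2}^2\,dt$ with $T_0 = |k|^{-4/(2m_2+3)}/A$, while the right side is $\lll k\rrr^{2(2m_2+1)/(2m_2+3)}\|\tu\|_{L^2}^2$. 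After dividing by the common factor $\lll k\rrr^{2(2m_2+1)/(2m_2+3)}$ and recalling $\|\tu\|_{L^2}^2 \sim h^{(1-2m_2)/(2m_2+3)} = |k|^{(2m_2-1)/(2m_2+3)}$, the claim becomes the statement that
\[
\int_0^{T_0} \|\chi e^{-itP_k}\tu\|_{L^2}^2\,dt \gtrsim |k|^{-4/(2m_2+3)} \|\tu\|_{L^2}^2,
\]
i.e. that for all $t \in [0,T_0]$ we have $\|\chi e^{-itP_k}\tu\|_{L^2} \gtrsim \|\tu\|_{L^2}$; the time integral then produces the factor $T_0 \sim |k|^{-4/(2m_2+3)}$.

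The key step is a Duhamel/energy estimate showing the flow moves slowly. Write $w(t) = e^{-itP_k}\tu$, so $\|w(t)\|_{L^2} = \|\tu\|_{L^2}$ by unitarity; the only thing to check is that $w(t)$ stays in a fixed small neighborhood of $x=1$ up to time $T_0$, so that $\chi w(t) = w(t) + \O(\cdots)$. Equivalently I would estimate $\frac{d}{dt}\|\chi w\|_{L^2}^2$. Rescaling $P_k - z$ on the $k$-mode to $P - z$ with $h = 1/|k|$, the quasimode relation $(hD_x)^2\tu = (\varpi')^2\tu + R$ together with $|\varpi'| \sim h^{\gamma/2}$ (Lemma, part (ii)) and the remainder bound $\|R\|_{L^2} = \O(h^\gamma)\|\tu\|_{L^2}$ from \eqref{E:R-remainder} shows that $\tu$ is an $\O(h^\gamma)$-quasimode of $h^{-2}(P-z)$ at the appropriate $z$ near $C_1^{-1}$, hence $(D_t - \tDelta)$ applied to $e^{ik\theta}\tu$ is $\O(h^{\gamma - 2})\|\tu\|_{L^2}$ in the relevant units — but measured against the natural time scale, the error accumulated over $[0,T_0]$ with $T_0 \sim h^{\gamma/2} \cdot (\text{const})$ is $o(1)\|\tu\|_{L^2}$. (Here one uses $\gamma = (4m_2+2)/(2m_2+3)$ so that $\gamma/2 = (2m_2+1)/(2m_2+3)$, and $4/(2m_2+3) = 2 - \gamma$, i.e. $T_0 \sim h^{2-\gamma}$, which matches the spectral gap $h^2 \cdot h^{-\gamma}$ governing the lifetime of the quasimode.) Combined with finite propagation/locality of $e^{-itP_k}$ on this time scale — the symbol $q_1$ has a zero of order $2m_2$ in $\xi$ at the trapped point, so the Hamilton flow moves at speed $\O(\mu^{2m_2}) = \O(h^{2m_2\gamma/(2m_2+1)})$ in $\xi$ and correspondingly slowly in $x$ — we get that $\supp w(t)$ stays within $\{|x-1| \leq 2\mu\} \subset \{\chi \equiv 1\}$ for $t \leq T_0$, modulo $\O(h^\infty)$ tails.

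Assembling: for $t \in [0,T_0]$, $\|\chi w(t)\|_{L^2} \geq \|w(t)\|_{L^2} - \|(1-\chi)w(t)\|_{L^2} \geq \|\tu\|_{L^2} - o(1)\|\tu\|_{L^2} \geq \tfrac12\|\tu\|_{L^2}$ for $|k|$ large. Integrating over $[0,T_0]$ and multiplying by $\lll k\rrr^2$ gives
\[
\int_0^{T_0} \|\lll D_\theta\rrr \chi\psi\|_{L^2}^2\,dt \geq \tfrac14 \lll k\rrr^2 T_0 \|\tu\|_{L^2}^2 = \tfrac{1}{4A}\lll k\rrr^2 |k|^{-4/(2m_2+3)} \|\tu\|_{L^2}^2,
\]
and since $\|\tu\|_{L^2}^2 \sim |k|^{(2m_2-1)/(2m_2+3)}$ and $\lll k\rrr^2 |k|^{-4/(2m_2+3)} |k|^{(2m_2-1)/(2m_2+3)} \sim |k|^{2 + (2m_2-5)/(2m_2+3)} = |k|^{(6m_2+1)/(2m_2+3)}$, while $\lll D_\theta\rrr^{(2m_2+1)/(2m_2+3)}\phi_0$ has $L^2$ norm squared $\sim |k|^{2(2m_2+1)/(2m_2+3)}|k|^{(2m_2-1)/(2m_2+3)} = |k|^{(6m_2+1)/(2m_2+3)}$, the two sides match in powers of $|k|$ and the constant $C_0$ can be chosen uniformly. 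The main obstacle I anticipate is making the locality/slow-propagation claim rigorous: controlling $\|(1-\chi)e^{-itP_k}\tu\|_{L^2}$ requires an Egorov-type or direct energy argument tracking how the support (or wavefront set) of $w(t)$ spreads, and this must be done carefully in the two-parameter rescaled calculus since the quasimode lives at the scale $\mu = \delta h^{\gamma/(2m_2+1)}$ rather than at unit scale — this is precisely the part that "follows from straightforward modifications of \cite{ChWu-lsm}" but is the technical heart of the argument.
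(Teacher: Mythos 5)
Your reduction to the one--dimensional problem, the power counting on both sides, and the identification of the time scale $T_0=|k|^{-4/(2m_2+3)}/A=h^{2-\gamma}/A$ as the quasimode lifetime are all correct and match the intended argument. The gap is in the one step that actually carries the theorem: the lower bound $\|\chi e^{-itP_k}\tu\|_{L^2}\gtrsim\|\tu\|_{L^2}$ for $t\in[0,T_0]$. You propose to get it from ``finite propagation/locality'' of $e^{-itP_k}$, claiming that $\supp w(t)$ stays in $\{|x-1|\le 2\mu\}$ modulo $\O(h^\infty)$, to be justified by an Egorov-type argument in the rescaled calculus --- and you flag this as the unresolved technical heart. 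As stated this claim is not justified (the Schr\"odinger propagator has no finite speed of propagation, so literal support statements fail), and even the heuristic is borderline: the packet's group velocity is $\sim h^{\gamma/2-1}$, so over time $T_0$ it moves a distance $\sim h^{1-\gamma/2}/A = h^{2/(2m_2+3)}/A$, which is comparable to the packet width $\mu\sim h^{\gamma/(2m_2+1)}=h^{2/(2m_2+3)}$ up to the factor $1/A$; since the theorem allows $\chi\equiv 1$ only on $\supp\tu$, a wavefront/support-spreading argument would have to be quantitative in $A$ and is the wrong tool here.

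The proof the paper points to (\cite[Theorem 3]{ChWu-lsm}) needs no propagation or Egorov input at all: compare $w(t)=e^{-itP_k}\tu$ with the explicit function $e^{-it\lambda}\tu$, $\lambda=h^{-2}(C_1^{-1}+E)$ (or its real part), via Duhamel. The quasimode relation $(hD)^2\tu=(\varpi')^2\tu+R$ together with \eqref{E:R-remainder} and the Taylor expansion of $A^{-2}$ gives $\|(P_k-\lambda)\tu\|_{L^2}\le C h^{\gamma-2}\|\tu\|_{L^2}$, hence $\|w(t)-e^{-it\lambda}\tu\|_{L^2}\le t\,Ch^{\gamma-2}\|\tu\|_{L^2}\le (C/A)\|\tu\|_{L^2}$ for $t\le T_0$. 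The main term is a bounded multiple of $\tu$ itself, on whose support $\chi\equiv 1$ exactly, so $\|\chi w(t)\|\ge (1-C'/A)\|\tu\|\ge\tfrac12\|\tu\|$ once $A$ is large. Note this is where the hypothesis ``$A$ sufficiently large'' enters; your write-up never uses it, and the accumulated error is $\O(1/A)$, not $o(1)$ as $h\to 0$ (also $T_0\sim h^{\gamma/2}$ earlier in your text is a slip, corrected by your own parenthetical $T_0\sim h^{2-\gamma}$). With this replacement of your locality step, the rest of your computation (integrating in $t$ to produce $T_0$ and matching powers of $|k|$ on both sides) goes through verbatim.
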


\bibliographystyle{alpha}
\bibliography{trans-sm-bib}

\def\cprime{$'$} \def\cftil#1{\ifmmode\setbox7\hbox{$\accent"5E#1$}\else
  \setbox7\hbox{\accent"5E#1}\penalty 10000\relax\fi\raise 1\ht7
  \hbox{\lower1.15ex\hbox to 1\wd7{\hss\accent"7E\hss}}\penalty 10000
  \hskip-1\wd7\penalty 10000\box7}
\begin{thebibliography}{MMT08}

\bibitem[BGH09]{BGH}
Nicolas Burq, Colin Guillarmou, and Andrew Hassell.
\newblock Strichartz estimates without loss on manifolds with hyperbolic
  trapped geodesics.
\newblock {\em preprint, arXiv:0907.3545v1 [math.AP]}, 2009.

\bibitem[Boo11]{Booth}
Robert Booth.
\newblock Energy estimates on asymptotically flat surfaces of revolution.
\newblock {\em Masters Project, University of North Carolina}, 2011.

\bibitem[Bur04]{Bur-sm}
N.~Burq.
\newblock Smoothing effect for {S}chr\"odinger boundary value problems.
\newblock {\em Duke Math. J.}, 123(2):403--427, 2004.

\bibitem[Chr07]{Chr-NC}
Hans Christianson.
\newblock Semiclassical non-concentration near hyperbolic orbits.
\newblock {\em J. Funct. Anal.}, 246(2):145--195, 2007.

\bibitem[Chr08]{Chr-disp-1}
Hans Christianson.
\newblock Dispersive estimates for manifolds with one trapped orbit.
\newblock {\em Comm. Partial Differential Equations}, 33:1147--1174, 2008.

\bibitem[Chr10]{Chr-QMNC}
Hans Christianson.
\newblock Quantum monodromy and non-concentration near a closed semi-hyperbolic
  orbit.
\newblock {\em Trans. Amer. Math. Soc. to appear}, 2010.

\bibitem[Chr13]{Chr-inf-deg}
Hans Christianson.
\newblock High-frequency resolvent estimates on asymptotically {E}uclidean
  warped products.
\newblock {\em in preparation}, 2013.

\bibitem[CKS95]{CKS}
Walter Craig, Thomas Kappeler, and Walter Strauss.
\newblock Microlocal dispersive smoothing for the {S}chr{\"o}dinger equation.
\newblock {\em Comm. Pure Appl. Math.}, 48(8):769--860, 1995.

\bibitem[CPV99]{CPV-trans-II}
Fernando Cardoso, Georgi Popov, and Georgi Vodev.
\newblock Distribution of resonances and local energy decay in the transmission
  problem. {II}.
\newblock {\em Math. Res. Lett.}, 6(3-4):377--396, 1999.

\bibitem[CPV01]{CPV-trans-I}
Fernando Cardoso, Georgi Popov, and Georgi Vodev.
\newblock Asymptotics of the number of resonances in the transmission problem.
\newblock {\em Comm. Partial Differential Equations}, 26(9-10):1811--1859,
  2001.

\bibitem[CS88]{ConSau}
Peter Constantin and Jean-Claude Saut.
\newblock Local smoothing properties of dispersive equations.
\newblock {\em J. Amer. Math. Soc.}, 1(2):413--439, 1988.

\bibitem[CV10]{CaVo-trans}
Fernando Cardoso and Georgi Vodev.
\newblock Boundary stabilization of transmission problems.
\newblock {\em J. Math. Phys.}, 51(2):023512, 15, 2010.

\bibitem[CW11]{ChWu-lsm}
Hans Christianson and Jared Wunsch.
\newblock Local smoothing for the schr\"odinger equation with a prescribed
  loss.
\newblock {\em preprint}, 2011.

\bibitem[Dat09]{Dat-sm}
Kiril Datchev.
\newblock Local smoothing for scattering manifolds with hyperbolic trapped
  sets.
\newblock {\em Comm. Math. Phys.}, 286(3):837--850, 2009.

\bibitem[Doi96a]{MR1373768}
Shin-ichi Doi.
\newblock Remarks on the {C}auchy problem for {S}chr\"odinger-type equations.
\newblock {\em Comm. Partial Differential Equations}, 21(1-2):163--178, 1996.

\bibitem[Doi96b]{Doi}
Shin-ichi Doi.
\newblock Smoothing effects of {S}chr\"odinger evolution groups on {R}iemannian
  manifolds.
\newblock {\em Duke Math. J.}, 82(3):679--706, 1996.

\bibitem[DV]{DaVa-gluing}
Kiril Datchev and Andr\'as Vasy.
\newblock Gluing semiclassical resolvent estimates via propagation of
  singularities.
\newblock {\em Int. Math. Res. Not., to appear}.

\bibitem[MMT08]{MR2565717}
Jeremy Marzuola, Jason Metcalfe, and Daniel Tataru.
\newblock Strichartz estimates and local smoothing estimates for asymptotically
  flat {S}chr\"odinger equations.
\newblock {\em J. Funct. Anal.}, 255(6):1497--1553, 2008.

\bibitem[RT07]{MR2333213}
I.~Rodnianski and T.~Tao.
\newblock Longtime decay estimates for the {S}chr\"odinger equation on
  manifolds.
\newblock In {\em Mathematical aspects of nonlinear dispersive equations},
  volume 163 of {\em Ann. of Math. Stud.}, pages 223--253. Princeton Univ.
  Press, Princeton, NJ, 2007.

\bibitem[Sj{\"o}87]{Sl}
Per Sj{\"o}lin.
\newblock Regularity of solutions to the {S}chrodinger equation.
\newblock {\em Duke Math. J.}, 55(3):699--715, 1987.

\bibitem[SZ91]{SjZw}
Johannes Sj{\"o}strand and Maciej Zworski.
\newblock Complex scaling and the distribution of scattering poles.
\newblock {\em J. Amer. Math. Soc.}, 4(4):729--769, 1991.

\bibitem[SZ02]{SjZw-mono}
Johannes Sj{\"o}strand and Maciej Zworski.
\newblock Quantum monodromy and semi-classical trace formulae.
\newblock {\em J. Math. Pures Appl. (9)}, 81(1):1--33, 2002.

\bibitem[SZ07]{SjZw-frac}
Johannes Sj{\"o}strand and Maciej Zworski.
\newblock Fractal upper bounds on the density of semiclassical resonances.
\newblock {\em Duke Math. J.}, 137(3):381--459, 2007.

\bibitem[Veg98]{Vega}
Luis Vega.
\newblock Schr{\"o}dinger equations: pointwise convergence to the initial data.
\newblock {\em Proc. Amer. Math. Soc.}, 102(4):874--878, 1998.

\bibitem[WZ00]{WZ}
Jared Wunsch and Maciej Zworski.
\newblock Distribution of resonances for asymptotically {E}uclidean manifolds.
\newblock {\em J. Differential Geom.}, 55(1):43--82, 2000.

\end{thebibliography}

\end{document}